\newtheorem{theorem}{Theorem}[section]
\newtheorem{lemma}{Lemma}[section]
\newtheorem{proposition}{Proposition}[section]
\newtheorem{definition}{Definition}[section]
\theoremstyle{definition}
\theoremstyle{remark}
\newtheorem{remark}{Remark}[section]
\numberwithin{equation}{section}
\begin{document}

\title[Global Flows for Inviscid SQG]{
Global Flows with Invariant Measures\\
for the inviscid modified SQG equations}

\author[Nahmod]{Andrea R. Nahmod$^1$}
\address{$^1$ 
Department of Mathematics \\ University of Massachusetts\\ 710 N. Pleasant Street, Amherst MA 01003}
\email{nahmod@math.umass.edu}
\thanks{$^1$ The first author is funded in part by NSF DMS-1201443 and DMS-1463714.}

\author[Pavlovi\'c]{Nata\v{s}a Pavlovi\'c$^2$}
\address{$^2$ 
Department of Mathematics\\ 
University of Texas at Austin\\ 
2515 Speedway, Stop C1200\\
Austin, TX 78712}
\email{natasa@math.utexas.edu}
\thanks{$^2$ The second author is funded in part by NSF DMS-1516228.}

\author[Staffilani]{Gigliola Staffilani$^3$}
\address{$^3$ Department of Mathematics\\
Massachusetts Institute of Technology\\ 
77 Massachusetts Avenue, Cambridge, MA 02139}
\email{gigliola@math.mit.edu}
\thanks{$^3$ The third author is funded in part by NSF
DMS-1362509 and DMS-1462401.}

\author[Totz]{Nathan Totz$^4$}
\address{$^4$ 
Department of Mathematics \\ University of Massachusetts\\ 710 N. Pleasant Street, Amherst MA 01003}
\email{totz@math.umass.edu}
\thanks{$^4$ The fourth author is funded in part by NSF DMS-1612931.}

\date{May 3,2017}

\begin{abstract}
We consider the family known as modified or generalized surface quasi-geostrophic equations (mSQG) consisting of the classical inviscid surface quasi-geostrophic (SQG) equation together with a family of regularized active scalars given by introducing a smoothing operator of nonzero but possibly arbitrarily small degree. This family naturally interpolates between the 2D Euler equation and the SQG equation. For this family of equations we construct an invariant measure on a rough $L^2$-based Sobolev space and establish the existence of solutions of arbitrarily large lifespan for initial data in a set of full measure in the rough Sobolev space. 
\end{abstract}
\maketitle

\section{Introduction}

The inviscid surface quasi geostrophic (SQG) equation, 
\begin{equation}\label{sqg}
\begin{cases}
\theta_t + (u \cdot \nabla)\theta = 0, \; \; \; \; x \in \mathbb{M} \mbox{ and } t >0\\
u = \mathcal{R}^\perp \theta, 
\end{cases}
\end{equation}
with $\mathbb{M}$ being either $\mathbb{T}^2$ or $\mathbb{R}^2$ is by now a well known geophysical model in atmospheric sciences which has been systematically studied by Constantin, Majda, and Tabak who, in particular, developed an analogy with the 3D Euler equations; by Pierrehumbert, Held, and Swanson; by Held, Pierrehumbert, Garner and Swanson and others (see \cite{comata1, comata2, pierre, held} and references therein).
This equation has since attracted a lot of attention. Many interesting results describing the behavior of \eqref{sqg} have been obtained, see e.g. \cite{re, co, cofe, wu, caco}. In particular, Resnick \cite{re}, constructed global in time weak solutions to \eqref{sqg} for initial data $\theta_0 \in L^2(\mathbb{R}^2)$.  The question of global in time existence and uniqueness of 
strong solutions to \eqref{sqg} - or global regularity - however, is still an outstanding open problem, just as for the 3D Navier-Stokes and 3D Euler equations.  
The SQG equation has strong similarities to the vorticity formulation of the incompressible 2D Euler equation,  in which $\theta$ is replaced by the fluid vorticity $\omega$ and the relation between the active scalar $u$ and the solution $\omega$ is altered as follows:
\begin{equation}\label{euler}
\begin{cases}
\omega_t + (u \cdot \nabla)\omega = 0, \; \; \; \; x \in \mathbb{M} \mbox{ and } t >0\\
u = \mathcal{R}^\perp |D|^{-1} \omega, 
\end{cases}
\end{equation}
where again $\mathbb{M}$ could be either $\mathbb{T}^2$ or $\mathbb{R}^2$. 
Global existence of classical smooth solutions to \eqref{euler} has been known for a while; see e.g. \cite{yud} and references therein. 
On the other hand, Albeverio and Cruzeiro \cite{alcr} used probabilistic tools to prove the existence of global flows valued in Sobolev spaces $H^s(\mathbb T^2), s< -2$ via the invariance of (Gibbs) measure associated to the conservation of enstrophy,  $\|\omega\|_{L^2(\mathbb{T}^2)}$,  for the incompressible 2D Euler equation \eqref{euler}. 

In light of \cite{alcr} it is natural to ask whether a similar program can be carried out for the SQG equation \eqref{sqg}, which also has conservation of  `enstrosphy'  $\|\theta\|_{L^2(\mathbb{T}^2)}$ . Such global flows, if they exist, would be less regular than those constructed by Resnick. A cornerstone of the argument laid in \cite{alcr} is showing that the $H^s$-norm of nonlinearity of the equation is finite almost surely with respect to the associated invariant measure $\rho$ constructed based on the conservation of SQG enstrophy (say in $L^2_{\rho}$). 

This estimate is a key ingredient in the compactness argument used to construct the random flows. At this point it is 
worth noting that the SQG equation \eqref{sqg} is one derivative less regular than 2D Euler equation \eqref{euler}. And this loss of regularity turns out to be insurmountable for it actually renders each Fourier mode of the nonlinearity of the SQG equation \eqref{sqg} infinite almost surely with respect to $\rho$, (c.f. Section \ref{subsec-expectation}).   However, the nature of the divergence is such that convergence is possible as soon as 
the Biot-Savart law for the velocity $u = \mathcal{R}^\perp \theta$ is only slightly more regular. That motivated us to consider the natural family of active scalars that interpolates between the SQG and Euler equations, 
which are known as the modified or generalized surface quasi-geostrophic equations (mSQG). We refer the reader to Constantin et al. \cite{ciw}, Pierrehumbert et al. \cite{pierre}, Schorghofer \cite{schor} and 
Smith et al. \cite{smith} and references therein for a geophysical context for the mSQG equations.

In this paper we thus consider such inviscid modified surface quasi-geostrophic equation (mSQG) which is also an active scalar equation describing the transport of the scalar valued function 
$$\theta = \theta(x,t): \mathbb{T}^2 \times [0, \infty) \rightarrow \mathbb{R}$$ 
under the velocity field $u$, which itself is related to $\theta$ now via a regularized Biot-Savart law. More precisely, 
the mSQG equation that we consider in this paper is given as follows: 

\begin{equation}\label{msqg}
\begin{cases}
\theta_t + (u \cdot \nabla)\theta = 0, \; \; \; \; x \in \mathbb{T}^2 \mbox{ and } t >0\\
u = \mathcal{R}^\perp |D|^{-\delta} \theta,
\end{cases}
\end{equation}
where $\delta>0$. Here 
$|D|:=(-\Delta)^{1/2}$ and 
$\mathcal{R}^\perp := \nabla^\perp |D|^{-1}$ denotes the Riesz transform, where
 $\nabla^\perp = (-\partial_{x_2}, \partial_{x_1})$.

When $\delta = 1$ the equation \eqref{msqg} coincides with the 2D Euler equation with $\theta$ representing the vorticity $\omega$.
When $\delta =0$ the equation \eqref{msqg} coincides with the inviscid surface quasi geostrophic (SQG) equation. For $0< \delta < 1$ the relation between the velocity $u$ and the function $\theta$ is less singular than in the case
of the SQG equation \eqref{sqg}, but more singular than in the case of the 2D Euler equation \eqref{euler}.

The equation \eqref{msqg} in the regime
$0 < \delta < 1$ has been studied using deterministic tools by C\'ordoba, Fontelos, Mancho and Rodrigo \cite{cofomaro}, where the evolution of patch-like initial data has been considered, and by Chae, Constantin and Wu, \cite{chcowu} who established a regularity criterion. Furthermore, in recent work by Kiselev, Resnick, Yao and Zlatos \cite{kiselev1} and by Kiselev, Yao and Zlatos \cite{kiselev2} patch dynamics and local-in-time regularity on the whole plane and in the half-plane were studied and initial data leading to finite time singularity was exhibited. The mSQG equation \eqref{msqg} with more singular velocities $\delta < 0$ has been recently studied in \cite{chcocogawu}.

In this paper we use probabilistic tools to obtain a global flow in $H^s(\mathbb R^2), s < -3 +\delta $ for the mSQG equation \eqref{msqg} for any $0< \delta \leq 1$ via an invariant Gaussian measure $\rho$.  In particular, we implement the approach of \cite{alcr} in the context of 
mSQG as follows: 
\begin{enumerate} 
\item We work with the streamline formulation of the equation, whose (sufficiently) smooth solutions still conserve  `enstrophy' (for details see Section \ref{sec-main}).  We  then consider the infinite Gaussian measure $\rho$ constructed with respect to this enstrophy.
\item We rewrite the streamline formulation of the equation in terms of an orthonormal $L^2$ basis as an infinite ODE system, for which we analyze the coefficient corresponding to the nonlinear term, with the goal of obtaining an expectation estimate that will allow subsequent probabilistic analysis. 
\item We introduce an approximate system of ODE, which is still an infinite system, but which has truncated nonlinear term. We show that each of these systems has a global flow and leaves the Gaussian measure $\rho$ invariant. 
\item Finally we perform a probabilistic convergence argument, which will give us a global flow for the streamline formulation of the equation in the support of the Gaussian measure $\rho$. 
\end{enumerate} 
We refer the reader to Section \ref{sec-main} for the precise set up and statement of the Main Theorem as well as to Remark \ref{interpolate}.

Our paper can be understood as a probabilistic version in the context of the mSQG equations of the result of Resnick for SQG and
as a generalization of the work of Albeverio and Cruzeiro \cite{alcr} to velocity fields $u$ that are more singular ($0<\delta <1$) with respect to $\theta$ than in the case of Euler equations. As we alluded above, our analysis does not carry to the $\delta = 0$ case, which corresponds to the SQG equation. In particular, the expectation result of Proposition \ref{NonlinearityExpectation} fails in this case due
to a certain logarithmic divergence that appears already in each Fourier mode (c.f. Section \ref{subsec-expectation} ) and is hence independent of the choice of function space from which we consider the initial data.

\smallskip

Finally, we note that in \cite{alcr} Albeverio and Cruzeiro also 
considered the 2D Navier-Stokes equations, stochastically perturbed by a white noise. This line of work on constructing global weak solutions has been continued for stochastically perturbed Navier-Stokes equations using sophisticated tools from
probability in e.g. \cite{dade, alfe}, where certain types of uniqueness have been established too. 

The program of construction of probabilistic weak solutions introduced by Albeverio and Cruzeiro \cite{alcr} was recently implemented for the wave and dispersive equations by Burq, Thomann and Tzvetkov in \cite{btt}. 
Also we note that upon completion of this work we learned of the recent work of Symeonides \cite{sym}, who considered the averaged Euler equations in 2D and obtained a global flow in the support of a Gaussian measure constructed based on the associated enstrophy. 
Just as 2D Euler is to SQG, the averaged 2D Euler equation is one derivative smoother than the mSQG equation. 

\smallskip

{\bf{Outline of the paper.}} The problem is reformulated using stream functions in Section 2; Section 2 also reviews the standard construction of the Gaussian measure. In Section \ref{sec-prob} we recall the probabilistic tools used in the proof of our main result. Section 4 is devoted to checking the crucial fact that the expectation of the nonlinearity is finite whenever $\delta > 0$. We then introduce the approximate flows in Section 5, where we then show the invariance of the Gaussian measure under these flows. Using the tools of Section \ref{sec-prob}, we construct the candidate random flows in Section 6. The proof of the main result is given in Section 7.

\smallskip

{\bf{Acknowledgements.}} 
The authors express their gratitude to MSRI for the kind hospitality and stimulating environment during the Fall 2015 semester, where the project started while all four authors were in residence for their special jumbo program {\it New Challenges in PDE: Deterministic Dynamics and Randomness in High and Infinite Dimensional Systems.} The authors also thank IHES for their kind hospitality in Summer 2016 during of their {\it Ondes Non Lin\'eaires} program. Special thanks go also to Alessio Figalli, Martin Hairer, Luc Rey-Bellet, and Vlad Vicol for insightful and helpful discussions.

\section{The statement of the main result} \label{sec-main}

In this section we rewrite the mSQG equation in the streamline formulation, and we then review the standard construction the Gaussian invariant measure based on the conservation of the ``enstrophy" for solutions of the streamline formulation of the mSQG. Then we state the main result of this paper.

\subsection{Streamline formulation for the mSQG}
In an analogy with 2D Euler equations in the vorticity form, 
we introduce the streamline function $\varphi$ for our 
equation \eqref{msqg} so that 
we can write the velocity $u$ as 
\begin{equation} \label{vel} 
u = \nabla^\perp \varphi. 
\end{equation}
Having in mind that according to \eqref{msqg}
\begin{equation} \label{vel-orig} 
u = \mathcal{R}^\perp |D|^{-\delta} \theta = \nabla^\perp |D|^{-1} |D|^{-\delta} \theta 
\end{equation} 
such streamline function $\varphi$ is related to $\theta$ via
$$ \varphi = |D|^{-1-\delta}\theta$$
resulting in the streamline formulation of the mSQG equation \eqref{msqg}: 
\begin{equation}\label{mSQG-str}
\begin{cases}
(|D|^{1+\delta}\varphi)_t + (u \cdot \nabla)|D|^{1+\delta}\varphi = 0 \\
u = \nabla^\perp \varphi.
\end{cases}
\end{equation}

\begin{remark}\label{str-interpolation} 
We observe that by taking $\delta = 1$ in the streamline formulation \eqref{mSQG-str} we indeed recover
the known streamline formulation for 2D Euler equations:
\begin{equation}\label{2DE-str}
\begin{cases}
(\Delta \varphi)_t + (u \cdot \nabla) \Delta \varphi = 0 \\
u = \nabla^\perp \varphi
\end{cases}
\end{equation}
which was the starting point for the work \cite{alcr}. 
\end{remark} 

We find it convenient to rewrite the streamline formulation \eqref{mSQG-str} 
in terms of the regularized stream function $\psi$ introduced via
\begin{equation} \label{psi} 
|D|^{\delta} \varphi = \psi.
\end{equation} 
Then the  regularized streamline formulation that we work with reads as follows: 
\begin{equation}\label{msqg-our-str}
\begin{cases}
\psi_t + |D|^{-1}(u \cdot \nabla)|D|\psi = 0 \\
u = \nabla^\perp |D|^{-\delta} \psi.
\end{cases}
\end{equation}

Below we will abbreviate the nonlinearity in \eqref{msqg-our-str} by
\begin{equation}\label{our-str-B}
B(\psi, \psi) := -|D|^{-1}(\nabla^\perp |D|^{-\delta} \psi \cdot \nabla)|D|\psi
\end{equation}

We recall that for classical solutions to \eqref{msqg} the SQG enstrophy $\|\theta\|_{L^2}$ is conserved in time. We also note that, thanks to \eqref{psi}, we have that $\psi$ is mean zero. Consequently, the homogeneous and inhomogeneous Sobolev spaces restricted to our space of solutions are comparable. We therefore take the Sobolev norm 
$$\|f\|_{H^s(\mathbb{T}^2)}^2 := \sum_{k \in \mathbb{Z}^2} |k|^{2s}|\hat{f}(k)|^2,$$
and following  the notation in \cite{btt}, we introduce the spaces 
\begin{equation}\label{xspace}
X^\sigma := \bigcap_{s < \sigma} H^s.
\end{equation}

Finally, it follows from the definitions of $\psi$ and $\varphi$ and the conservation of $\|\theta \|_{L^2}$ for solutions $\theta$ of \eqref{msqg} that 
$\|\psi\|_{H^1}$ is formally conserved in time.  It is this conservation  of $\|\psi\|_{H^1}$ that  gives rise to the Gaussian measure $\rho$ introduced in the next subsection.

\subsection{The Gaussian Invariant Measure and its Support}

Here we review the construction of a centered Gaussian measure defined on functions $H^s$. The construction presented here is standard (see for example \cite{bog}) and we include it for the sake of completeness. 

If $\psi_k$ denote the Fourier coefficients of $\psi$, then heuristically we would like to define

\begin{equation}
``\, \, d\rho(\psi) \quad := \quad \frac{1}{Z} \prod_{k \in \mathbb{Z}^2} \exp(-2|k|^2|\psi_k|^2) d\psi_k \, \, "
\end{equation}
where $d\psi_k := dx_k \, dy_k$ is the Lebesgue measure on $\mathbb{C}$ associated to the variable $\psi_k = x_k + iy_k \in \mathbb{C}$, and $Z$ is the appropriate normalization factor needed to yield a probability measure. Unfortunately this heuristic expression is not well-defined.

To proceed rigorously, one constructs this measure as the weak limit of a sequence of premeasures defined on $H^s$ whose index will be determined later. In order to agree with the heuristically defined measure introduced above, fix the Hilbert space $\mathcal{H} = H^1$ corresponding to the conserved quantity of \eqref{msqg-our-str}, and introduce the correlation operator $$\mathcal{T} : \mathcal{H} \to \mathcal{H} : \mathcal{T}(\psi) = |D|^{2 - 2s} \psi.$$ The operator $\mathcal{T}$ has eigenvalues $$\lambda_k = |k|^{2 - 2s}$$ and corresponding eigenvectors $$e_k^s: = |k|^{-s}e^{ik \cdot x}$$ for $k\in \mathbb{Z}^2$. 
Note that $$\sum_{k \in \mathbb{Z}^2}^\infty |k|^2 |\psi_k|^2 = \langle \psi, \psi \rangle_{H^1} = \langle \mathcal{T}\psi, \psi \rangle_{H^s}.$$
This correlation operator is then used to build a sequence of pre-measures. Fix for the moment some $s \in \mathbb{R}$. In what follows, define for $k \in \mathbb{Z}^2$ the maximum norm $|k| = |(k_1, k_2)| = \max(k_1, k_2)$. For each $N \in \mathbb{N}$, define the projections 
$$\pi_N : H^s \to \mathbb{C}^{(2N + 1)^2}$$
by 
$$ \pi_N(\psi) = (\langle \psi, e_k^s \rangle_\mathcal{H})_{\{k\in \mathbb{Z}^2\, : |k |\leq N\}}$$ 
corresponding to the orthonormal basis $(e_k^s)$ of $H^s({\mathbb{T}}^2)$. We denote 
$$E_N := \text{span}\{e^s_k : |k| \leq N\}$$
We say that a set $M \subset H^s$ is \textit{$N$-cylindrical} if there exists some Borel set $F \subset \mathbb{C}^{(2N + 1)^2}$ for which $M = \pi_N^{-1}(F)$. 
Denote the algebra of $N$-cylindrical sets by $\mathcal{A}_N$. Similarly, call $M \subset \mathcal{H}$ \textit{cylindrical} if it is $N$-cylindrical for some $N \geq 1$, and introduce the algebra $\mathcal{A}$ of cylindrical sets.

Now define the following pre-measure for each $M = \pi_N^{-1}(F) \in \mathcal{A}$:
\begin{align}\label{premeasure}
\rho(M) & := \left(\prod_{|k|\leq N} \frac{1}{\sqrt{2\pi \lambda_k}} \right) \int_F \exp\left(-\frac12 \sum_{|k|\leq N} \lambda_k^{-1} |\psi_k|^2 \right) \, d\psi_1 \cdots d\psi_{(2N + 1)^2}.
\end{align}
This pre-measure is not necessarily countably additive. The following proposition (c.f. Proposition 1.3.1 of \cite{alcr}) 
gives us a criterion for when the pre-measure is countably additive:
\begin{proposition}\label{traceclass}
The Gaussian measure $\rho$ defined above is countably additive if and only if $\mathcal{T}^{-1}$ is of trace class. In this case, the minimal $\sigma$-algebra containing $\mathcal{A}$ is the Borel $\sigma$-algebra on $\mathcal{H}$.
\end{proposition}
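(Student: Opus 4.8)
The plan is to recognize the premeasure $\rho$ as the standard cylindrical Gaussian measure associated to the Hilbert space $\mathcal{H} = H^1$ with correlation operator $\mathcal{T}^{-1}$, and then invoke (and reprove) the classical Kolmogorov-type dichotomy for when such a cylindrical measure extends to a genuine countably additive Borel measure. Concretely, I would first fix the orthonormal basis $(e^s_k)$ of $\mathcal{H} = H^1$ — note each $e^s_k = |k|^{-s}e^{ik\cdot x}$ satisfies $\|e^s_k\|_{H^1} = |k|^{1-s}/|k|^{-s}\cdot\ldots$, so one should double-check normalization, but in any case $\mathcal{T}$ is diagonalized in this basis with eigenvalues $\lambda_k = |k|^{2-2s}$ — and observe that under $\pi_N$ the premeasure \eqref{premeasure} is exactly the product of one-dimensional (complex) centered Gaussians with variances proportional to $\lambda_k$. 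Thus $\rho$ is the canonical cylinder-set measure of the Gaussian with covariance $\mathcal{T}^{-1}$.

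The key step is the following: a cylinder Gaussian measure on a separable Hilbert space is countably additive on the Borel $\sigma$-algebra if and only if its covariance operator is nuclear (trace class); this is exactly the statement that $\mathcal{T}^{-1}$ is of trace class, i.e. $\sum_k \lambda_k^{-1} = \sum_k |k|^{2s-2} < \infty$. I would prove the ``if'' direction by the Minlos--Sazonov / Fernique route: when $\sum_k \lambda_k^{-1} < \infty$, the formal series $\sum_k g_k \sqrt{\lambda_k^{-1}}\, e^s_k$ with $g_k$ i.i.d. standard Gaussians converges in $L^2(\Omega; \mathcal{H})$, hence almost surely in $\mathcal{H}$, and its law is a Radon probability measure on $\mathcal{H}$ whose finite-dimensional projections agree with the $\pi_N$-pushforwards of \eqref{premeasure}; by uniqueness of the extension (the cylinder sets generate the Borel $\sigma$-algebra, which gives the last sentence of the statement) this law \emph{is} the countably additive extension of $\rho$. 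For the ``only if'' direction, I would argue contrapositively: if $\sum_k \lambda_k^{-1} = \infty$, then by the Kolmogorov three-series theorem (or the zero--one law for Gaussian series) the random series above diverges in $\mathcal{H}$ almost surely, so any countably additive extension would have to assign the whole mass to a measurable subset on which $\|\psi\|_{\mathcal{H}}^2 = \sum_k |k|^2|\psi_k|^2 = \infty$ with probability one — but that set is $\rho$-measure-zero-by-construction as a countable union of cylindrical sets of vanishing measure, forcing $\rho(\mathcal{H}) = 0$, contradicting that $\rho$ is a probability measure. Equivalently and more slickly, one cites Proposition 1.3.1 of \cite{alcr} verbatim, since the setup here is identical.

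The main obstacle — really the only delicate point — is making the "only if" direction airtight: one must be careful that "countably additive" is interpreted as "extends to a countably additive probability measure on $\sigma(\mathcal{A})$", and then show that nuclearity of $\mathcal{T}^{-1}$ is forced. The clean way is Fernique's theorem together with the fact that a centered Gaussian measure on a Hilbert space always has a trace-class covariance operator (its covariance is $\int \psi\otimes\psi\,d\rho$, whose trace is $\int \|\psi\|_{\mathcal{H}}^2\,d\rho < \infty$ by Fernique); combined with the computation that this covariance must coincide with $\mathcal{T}^{-1}$ on the dense span of the $e^s_k$, one gets $\operatorname{tr}\mathcal{T}^{-1} < \infty$. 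The identification of the minimal $\sigma$-algebra containing $\mathcal{A}$ with the Borel $\sigma$-algebra on $\mathcal{H}$ then follows from separability of $\mathcal{H}$: open balls are countable intersections of cylinder sets and conversely, so $\sigma(\mathcal{A}) = \mathcal{B}(\mathcal{H})$. Since all of this is standard and recorded in \cite{alcr, bog}, I would keep the write-up short and largely defer to those references.
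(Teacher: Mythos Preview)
Your proposal is correct and in fact goes well beyond what the paper does: the paper gives no proof of this proposition at all, merely stating it with the parenthetical ``(c.f.\ Proposition 1.3.1 of \cite{alcr})'' and moving on. You explicitly identify this same reference as the appropriate citation and correctly note that the setup is identical, so your approach subsumes the paper's treatment; the additional sketch you provide (the Gaussian-series construction for the ``if'' direction and the Fernique-based argument for the ``only if'' direction) is standard, sound, and more informative than the bare citation the paper offers.
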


In our case, we have that $\mathcal{T}^{-1}$ is of trace class provided
\begin{equation}
\sum_{k\in \mathbb{Z}^2}^\infty \lambda_k^{-1} =
\sum_{k \in \mathbb{Z}^2} |k|^{2s - 2} < \infty,
\end{equation}
which occurs provided we choose $s < 0$. Therefore the support of $\rho$ is in the space $X^0$.

Having constructed $\rho$, we adopt the usual notation for the expectation with respect to $\rho$:
$$\mathbb{E}_\rho(F(\psi)) = \int_{X^0} F(\psi) \, d\rho(\psi).$$
We note the following moment expectations which can be calculated explicitly from the definition \eqref{premeasure}:
\begin{align}\label{MomentExpectations}
\mathbb{E}_\rho(\psi_k) & = 0, \notag \\
\mathbb{E}_\rho(\psi_k \psi_{k^\prime}) & = 0, \\
\mathbb{E}_\rho(\psi_k \overline{\psi}_{k^\prime}) & = \frac{2\delta_{k, k^\prime}}{|k|^2|k^\prime|^2}. \notag
\end{align}
\smallskip

Given two Banach spaces $\mathfrak{X}, \mathfrak{Y}$ for which the support of $\rho$ is contained in $\mathfrak{X}$, we denote by $L^2_\rho(\mathfrak{X}, \mathfrak{Y})$ the space of all functions $\mathfrak{F} : \mathfrak{X} \to \mathfrak{Y}$ for which
$$\|\mathfrak{F}(\psi)\|_{L^2_\rho(\mathfrak{X}, \mathfrak{Y})}^2 := \int_{\mathfrak{X}} \|\mathfrak{F}(\psi)\|_\mathfrak{Y}^2 \, d\rho(\psi) < \infty.$$ 
Often in the sequel the domain $\mathfrak{X}$ will be understood from context, at which point we abbreviate $L^2_\rho(\mathfrak{X}, \mathfrak{Y}) = L^2_\rho(\mathfrak{Y})$.

\smallskip

It will be useful in the sequel to decompose $\rho$ along the perpendicular subspaces $H^s = E_N \oplus E_N^\perp$ for $s < 0$. Fixing some $N \in \mathbb{N}$, introduce the measure $\rho_N$ defined for $N$-cylindrical subsets $M$ of $H^s$ by
\begin{equation*}
\rho_N(M) := \left(\prod_{|k|\leq N} \frac{1}{\sqrt{2\pi \lambda_k}} \right) \int_{\pi_N(M)} \exp\left(-\frac12 \sum_{|k|\leq N} \lambda_k^{-1} \psi_k^2\right) \, d\psi_1 \cdots d\psi_{(2N + 1)^2}
\end{equation*}
For $N^\prime > N$, define $\pi_{N, N^\prime} : H^s \to \mathbb{C}^{(2N^\prime + 1)^2 - (2N + 1)^2}$ by 
$$\pi_{N, N^\prime}(\psi) = (\langle \psi, e_k^s \rangle)_{N < |k| \leq N^\prime}$$
Introduce the cylindrical measure $\rho_N^\perp$ defined on the set of cylindrical subsets $M^\prime = \pi_{N, N^\prime}(F)$ with $F \subset \pi_{N, N^\prime}(\mathcal{H})$ by
\begin{equation*}
\rho_N^\perp(M^\prime) = \left(\prod_{N < |k| \leq N^\prime} \frac{1}{\sqrt{2\pi \lambda_k}} \right) \int_{F} \exp\left(-\frac12 \sum_{N < |k| \leq N^\prime} \lambda_k^{-1} \psi_k^2 \right) \, d\psi_{(2N + 1)^2 + 1} \cdots d\psi_{(2N^\prime + 1)^2}
\end{equation*}
By Proposition \ref{traceclass}, the measure $\rho_N^\perp$ extends to a measure defined on $E_N^\perp$ supported on the same space $X^0$ as $\rho$. Moreover, if we decompose $\psi = \pi_N(\psi) + (\psi - \pi_N(\psi)) =: \psi_N + \psi_N^\perp$, we have by the Fubini-Tonelli Theorem that $d\rho(\psi) = d\rho_N(\psi_N) d\rho_N^\perp(\psi_N^\perp)$.

\subsection{Statement of the main result} 

With the construction of the Gaussian measure $\rho$ we can state our result that establishes global flows for the regularized streamline formulation \eqref{msqg-our-str}
of the (mSQG) equation. More precisely: 

\begin{theorem}\label{FlowOnArbLongFinite}
Let $T > 0$ be given. Then there exists a flow $\tilde{\Psi}(\omega, t)$ defined on a probability space $(\tilde{\Omega}, \tilde{\mathcal{F}}, \tilde{P})$ with values in $C([0, T] : X^{-2})$ such that for $\tilde{P}$-almost every $\omega \in \tilde{\Omega}$,
\begin{equation}\label{FullFlowDuHamel}
\tilde{\Psi}(\omega, t) = \tilde{\Psi}(0, \omega) + \int_0^t B(\tilde{\Psi}(\omega, \tau)) \, d\tau,
\end{equation}
where $B$ is as in \eqref{our-str-B}, as well as a Gaussian measure $\rho$ supported on $X^{-2}$ which is invariant with respect to $\tilde{\Psi}(t, \omega)$, i.e., for all measurable $F : X^{-2} \to \mathbb{R}$ and $t \in [0, T]$,
\begin{equation}\label{FullFlowInvariance}
\int_{\tilde{\Omega}} F(\tilde{\Psi}(\omega, t)) \, d\tilde{P}(\omega) = \int_{X^{-2}} F(\psi) \, d\rho(\psi).
\end{equation}
\end{theorem}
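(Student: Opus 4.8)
The plan is to follow the Albeverio--Cruzeiro scheme exactly as outlined in items (1)--(4) of the introduction, so this proof will be the final assembly step that glues together the earlier sections. First I would set up the infinite-dimensional ODE system obtained by writing \eqref{msqg-our-str} in the orthonormal basis $(e_k^s)$, so that the flow becomes a system $\dot\psi_k = B_k(\psi)$ with $B_k$ the $k$-th Fourier coefficient of $B(\psi,\psi)$. The crucial input here is Proposition \ref{NonlinearityExpectation}, which (for $\delta>0$) gives that $B(\psi,\psi)\in L^2_\rho(X^{-2})$ — this is exactly where the $\delta=0$ case fails, and it is what makes $X^{-2}$ (equivalently any $H^s$ with $s<-3+\delta$, c.f. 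Remark \ref{interpolate}) the right target space.

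Next I would introduce the truncated systems: for each $N$, replace $B$ by $B^{(N)}:=\pi_N B(\pi_N\,\cdot\,,\pi_N\,\cdot\,)$, which lives on the finite-dimensional space $E_N$ and has a trivial dynamics on $E_N^\perp$. Because the nonlinearity on $E_N$ is a polynomial (hence locally Lipschitz) vector field, Picard--Lindelöf gives a local flow $\Psi^{(N)}(t,\cdot)$; globality in time comes from the conservation of the truncated enstrophy $\|\pi_N\psi\|_{H^1}$ (the quadratic form preserved by the truncated system), which confines trajectories to spheres. The key structural fact — proved in Section 5 — is that $\Psi^{(N)}(t,\cdot)$ preserves $\rho$; this follows from a Liouville-type argument, namely that $B^{(N)}$ is divergence-free as a vector field on $E_N$ (a consequence of the antisymmetry of the transport nonlinearity) together with the invariance of the Gaussian weight under the enstrophy-preserving flow, and the decomposition $d\rho = d\rho_N\,d\rho_N^\perp$ from the end of Section 2. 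At this point each $\Psi^{(N)}$ defines a measure-preserving random flow on $X^{-2}$ with the Duhamel identity \eqref{FullFlowDuHamel} holding with $B$ replaced by $B^{(N)}$.

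Then comes the compactness/convergence step (Section 6). Using the $\rho$-invariance of $\Psi^{(N)}$ together with the uniform bound $\sup_N \|B^{(N)}(\psi)\|_{L^2_\rho(X^{-2})}<\infty$ coming from Proposition \ref{NonlinearityExpectation} (and a corresponding bound ensuring the difference $B-B^{(N)}\to 0$ in $L^2_\rho$), one shows that the laws of $\Psi^{(N)}(\cdot)$, viewed as random variables valued in $C([0,T]:X^{-2-\varepsilon})$ for a slightly rougher exponent, form a tight family: the invariance transfers the Gaussian tail bounds in $X^{0}$ uniformly in $t$, and the Duhamel formula plus the $L^2_\rho$-bound on $B^{(N)}$ gives equicontinuity in time with values in a space into which $X^0$ embeds compactly. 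By Prokhorov one extracts a weakly convergent subsequence, and by the Skorokhod representation theorem one realizes the limit on a new probability space $(\tilde\Omega,\tilde{\mathcal F},\tilde P)$ together with almost sure convergents $\tilde\Psi^{(N)}\to\tilde\Psi$ in $C([0,T]:X^{-2})$. Passing to the limit in the Duhamel identity — here one needs that $B^{(N)}(\tilde\Psi^{(N)}(\cdot,\tau))\to B(\tilde\Psi(\cdot,\tau))$ in an integrable sense, which is delivered by the $L^2_\rho$ estimates combined with the a.s. convergence and Vitali's theorem — yields \eqref{FullFlowDuHamel}. The invariance \eqref{FullFlowInvariance} passes to the limit immediately since each $\Psi^{(N)}$ preserves $\rho$ and weak convergence of the laws is preserved under the bounded-continuous test functions $F$ (and then extended to general measurable $F$ by a standard approximation).

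The main obstacle I expect is the limit passage in the nonlinear Duhamel term: $B$ is an unbounded, genuinely nonlinear operator on the support of $\rho$ (it maps $X^0$ only into the much rougher $X^{-2}$, and only in an $L^2_\rho$-averaged sense, not pointwise on $X^0$), so there is no continuity of $B$ that would let one pass to the limit pathwise. The resolution is precisely to exploit the \emph{invariance} of $\rho$ under the approximate flows: it forces $\int_0^T \mathbb E_{\tilde P}\|B^{(N)}(\tilde\Psi^{(N)}(\omega,\tau)) - B(\tilde\Psi^{(N)}(\omega,\tau))\|_{X^{-2}}^2\,d\tau = T\,\|B^{(N)} - B\|_{L^2_\rho(X^{-2})}^2 \to 0$, converting the analytically intractable nonlinear convergence into a statement about $L^2_\rho$-convergence of the fixed functions $B^{(N)}\to B$, which is the content of Proposition \ref{NonlinearityExpectation}. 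Combined with uniform integrability (again from the $L^2_\rho$ bound and invariance) this upgrades a.s. convergence of the integrands to $L^1$ convergence of the time integrals, and the theorem follows.
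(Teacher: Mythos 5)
Your overall architecture --- Galerkin truncation $B^N=\Pi_N B(\Pi_N\,\cdot)$, global existence from conservation of the truncated $H^1$ norm, Liouville plus the decomposition $d\rho=d\rho_N\,d\rho_N^\perp$ for invariance of $\rho$ under $\Psi^N$, tightness of the laws via that invariance together with the $L^2_\rho$ bound on the nonlinearity, Prokhorov, Skorokhod, and a Vitali argument for the Duhamel term --- is the same as the paper's. Two small points of bookkeeping: the convergence $B^N\to B$ in $L^2_\rho$ is Proposition \ref{SumTailGoesToZero}, not Proposition \ref{NonlinearityExpectation} (the latter's proof is what powers it), and the truncated Duhamel identity for the Skorokhod copies $\tilde\Psi^N$ is not automatic but is transferred from $\Psi^N$ by equality of laws (the residual argument $\mathcal{L}(\tilde Y^N)=\mathcal{L}(Y^N)=\delta_0$ in Section 7).

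The genuine gap is in your final limit passage. Your $I_1$-type reduction is correct: invariance plus equality of laws gives $\int_0^T\mathbb{E}_{\tilde P}\|B^N(\tilde\Psi^N)-B(\tilde\Psi^N)\|_{H^s}^2\,d\tau=T\|B^N-B\|_{L^2_\rho}^2\to 0$. But after this you must still pass from $\int_0^t B(\tilde\Psi^N)\,d\tau$ to $\int_0^t B(\tilde\Psi)\,d\tau$, and here you invoke ``a.s.\ convergence of the integrands'' plus uniform integrability. That a.s.\ convergence is precisely what is not available: $B$ is not continuous on $X^{-2}$ (it is only defined $\rho$-a.s., as an $L^2_\rho$ object), which is the obstruction you yourself flagged, and Vitali can upgrade a.s.\ convergence to $L^1$ convergence but cannot create it. The paper's fix, following Da Prato--Debussche, is a second, auxiliary truncation level $M$: split the error into the four pieces $B^N(\tilde\Psi^N)-B(\tilde\Psi^N)$, $B(\tilde\Psi^N)-B^M(\tilde\Psi^N)$, $B^M(\tilde\Psi^N)-B^M(\tilde\Psi)$, and $B^M(\tilde\Psi)-B(\tilde\Psi)$. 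Only the third piece uses a.s.\ convergence plus Vitali, and it works because $B^M$, being finite-rank, \emph{is} continuous on $X^{-2}$ for fixed $M$; the second is controlled uniformly in $N$ by invariance under $\Psi^N$ together with Proposition \ref{SumTailGoesToZero}; and the fourth requires knowing that the law of $\tilde\Psi(\tau)$ is $\rho$, i.e.\ the invariance \eqref{FullFlowInvariance} of the limit flow, which is also what makes $B(\tilde\Psi(\omega,\tau))$ well defined a.s.\ in the first place. Consequently the invariance must be established \emph{before} the Duhamel identity (via the time-$t$ marginals $\nu_N^{(t)}=\rho\Rightarrow\nu^{(t)}=\rho$), whereas your write-up proves Duhamel first; with the order reversed and the $M$-truncation inserted, your argument closes and coincides with the paper's.
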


We will prove this theorem in Section 7. 

\smallskip

\begin{remark}\label{interpolate}
As mentioned in the introduction, the case $\delta = 1$ corresponds to the 2D Euler equations for which \cite{alcr} proves a similar result with global flows with values in $X^{-1}$.  It is therefore natural to wonder why in Theorem \ref{FlowOnArbLongFinite} the space $X^{-2}$ on which the flows take values is not instead the space $X^{-2 + \delta}$, improving as the smoothing parameter $\delta$ increases.  This apparent discrepancy is resolved by the fact that we construct global flows for the regularized stream function $\psi = |D|^{-\delta} \varphi$ rather than the streamline function $\varphi$ used in \cite{alcr}.  In order to directly compare our result with that of \cite{alcr} we must rewrite Theorem \ref{FlowOnArbLongFinite} in terms of $\varphi$ via \eqref{psi}. 
Doing so, our result implies the existence of global flows in the quantity $\varphi$ with values in the space $X^{-2 + \delta}, \, 0<\delta\leq 1$, thus recovering the probabilistic result of Albeverio-Cruzerio for Euler \cite{alcr}  when $\delta=1$.
\end{remark}

\smallskip

\begin{remark}\label{alternative}  If one takes the nonlinearity $B$ to be that of the equation \eqref{mSQG-str} for the streamline function instead of equation \eqref{msqg-our-str} and follows the argument of this paper\footnote{For the analogue of Subsection \ref{subsec-expectation} see the calculations in Appendix A.}, one can similarly construct global flows for $\varphi$ directly, based on the conservation in time of $\|\varphi\|_{H^{1+\delta}}$. Doing so however, offers no advantage over our approach here as explained in Remark \ref{interpolate} while complicating somewhat the exponents appearing in the calculations. 
\end{remark}

\begin{remark}
We briefly discuss the question of uniqueness of solutions to (mSQG). One expects due to the roughness of the solutions that uniqueness will be difficult to prove. We make no claim of uniqueness here, but instead mention some standard approaches which cannot be used in our setting. Existing deterministic local well-posedness results require too much regularity to be of use in our setting (c.f. \cite{comata1, coco}). An approach that recovers a weaker almost sure version of uniqueness can be found in the work of Ambrosio and Figalli \cite{AF}, where almost sure uniqueness of nonlinear flows of the form 
\begin{equation}\label{afflow}
X^\prime(t) = B(X(t), t)
\end{equation} is a consequence of uniqueness of the continuity equation 
\begin{equation}\label{conteq}
\frac{\partial \mu}{\partial t} + \text{div}_\rho(B\mu) = 0
\end{equation}
satisfied by the generalized flows $\mu$ associated to \eqref{afflow}\footnote{Here $\text{div}_\rho$ is the formal $L^2_\rho$-adjoint of the gradient.}. However, in \cite{AF}, uniqueness for \eqref{conteq} itself crucially depends on the nonlinearity $B$ taking values in the Cameron-Martin space associated to the Gaussian measure. In our case we show that $B$ takes values in a rougher space, which is not sufficient to allow the application of the result in \cite{AF}.
\end{remark}

\section{Probabilistic toolbox} \label{sec-prob} 

In this section we present a brief review of some classical probabilistic results on convergence of random variables. 
Throughout this section, for a metric space $S$ we denote by $\mathcal{B}(S)$ the Borel $\sigma$-algebra. 

\smallskip

We start by recalling the definitions of weak compactness and tightness, see e.g. \cite{KoSi-2007}, Section 8.3. 
\begin{definition} 
Let $S$ be a metric space. 
A family of probability measures $\{P_{\alpha} \}$ on $(S, \mathcal{B}(S))$
is said to be weakly compact if from any sequence $\{P_n \}_{n=1}^{\infty} \subset \{P_{\alpha} \}$
one can extract a weakly convergent subsequence $\{P_{n_k} \}_{k=1}^{\infty}$.
\end{definition}

\begin{definition} 
Let $S$ be a metric space. 
A family of probability measures $\{P_{\alpha} \}$ on $(S, \mathcal{B}(S))$
is said to be tight if for every $\epsilon > 0$ there exists a compact set 
$K_\epsilon \subset S$ such that 
$P(K_\epsilon) \geq 1 - \epsilon$ 
for each $P \in \{P_{\alpha} \}$. 
\end{definition} 

Now we are ready to state the compactness criterion of Prokhorov, see e.g. \cite{KoSi-2007}, Section 8.3 
or \cite{Bi}, Section 5, which we shall use in our Section 6. In particular, we shall use only the first part of Prokhorov theorem, but for completeness purposes we included the full statement. 

\begin{theorem} (Prokhorov) 
Suppose $S$ is a metric space. 
\begin{enumerate} 
\item[(i)] If a family of probability measures $\{P_{\alpha} \}$ on $(S, \mathcal{B}(S))$
is tight, then it is weakly compact. 

\item[(ii)] Suppose that $S$ is a separable complete metric space. If a family of probability measures $\{P_{\alpha} \}$ on $(S, \mathcal{B}(S))$
is weakly compact, then it is tight. 
\end{enumerate}
\end{theorem}

We conclude this short review with the statement of Skorohod's Theorem (for details see e.g. \cite{Bi} page 70), which we shall also use in Section 6 in order to construct our random flows. Given a probability space $(\Omega, \mathcal{F}, P)$ equipped with a probability measure, a measurable space $(E, \mathcal{E})$, and a random variable $X : (\Omega, \mathcal{F}, P) \to (E, \mathcal{E})$, the law $\mathcal{L}(X)$ of $X$ is the measure defined on the state space $(E, \mathcal{E})$ given by
\begin{equation}
\mathcal{L}(X)(A) = P(\{\omega \in \Omega : X(\omega) \in A\}) \qquad \text{for every }A \in \mathcal{E}.
\end{equation}

\begin{theorem} (Skorohod) 
Suppose $S$ is a separable metric space and $\{P_n \}_{n=1}^{\infty}$ and $P_\infty$ are probability measures on $(S, \mathcal{B}(S))$. 
If $P_n \rightarrow P_\infty$ weakly, then there exist
random variables $\{X_n\}_{n=1}^{\infty}$ and $X$ 
defined on a common probability space $(\Omega, \mathcal{F}, P)$ such that 
$\mathcal{L}(X_n) = P_n$, 
$\mathcal{L}(X) = P_\infty$, 
and 
$X_n \rightarrow X$ almost surely in $P$. 
\end{theorem}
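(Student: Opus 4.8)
The plan is to realize the entire sequence of laws simultaneously on the canonical atomless probability space $(\Omega,\mathcal F,P)=\bigl([0,1),\mathcal B([0,1)),\lambda\bigr)$, $\lambda$ the Lebesgue measure, via Skorohod's classical ``nested quantile'' coupling. We may assume $S$ is a \emph{complete} separable metric space --- the only case needed in the applications; in general one first replaces $S$ by the closure of a complete separable subspace carrying $P_\infty$ --- so that the Cauchy sequences produced below converge in $S$. The first and technically most delicate ingredient is a family of partitions: for each $k\ge1$ I would construct a genuine Borel partition $S=\bigsqcup_{i\ge1}B_{k,i}$ with $\text{diam}\,B_{k,i}<1/k$ for all $i$, with each $B_{k,i}$ a $P_\infty$-continuity set ($P_\infty(\partial B_{k,i})=0$), and with $\{B_{k+1,i}\}_i$ refining $\{B_{k,i}\}_i$. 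One covers $S$ by countably many balls $U_1,U_2,\dots$ of radius $<1/(2k)$, choosing the radii outside the at most countable set of values for which the concentric sphere carries positive $P_\infty$-mass; disjointifies in order, $B_{k,1}:=U_1$ and $B_{k,i}:=U_i\setminus\bigcup_{j<i}U_j$, so that each $\partial B_{k,i}$ lies in a \emph{finite} union of $P_\infty$-null spheres; and finally intersects consecutive levels to obtain the refinement (diameters only decrease).

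Next I would record the quantitative consequence of weak convergence. Since each $B_{k,i}$ is a $P_\infty$-continuity set, the portmanteau theorem gives $P_n(B_{k,i})\to P_\infty(B_{k,i})$ as $n\to\infty$; and since $\{B_{k,i}\}_i$ is an honest partition, $\sum_i P_n(B_{k,i})=1=\sum_i P_\infty(B_{k,i})$, so Scheffé's lemma upgrades this to $\sum_{i\ge1}\bigl|P_n(B_{k,i})-P_\infty(B_{k,i})\bigr|\to0$ for every fixed $k$. Now fix representatives $b_{k,i}\in B_{k,i}$ and a depth-first ordering of the tree whose level-$k$ nodes are the cells $B_{k,i}$. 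For each measure $Q$ among $P_1,P_2,\dots$ and $P_\infty$, I subdivide $[0,1)$ into consecutive half-open intervals $J^Q_{k,i}$ of length $Q(B_{k,i})$, laid down in the fixed tree order and nested compatibly with the refinement of the partitions, and let $i^Q_k(\omega)$ be the index with $\omega\in J^Q_{k,\,i^Q_k(\omega)}$. Then set
\[
X_n(\omega):=\lim_{k\to\infty}b_{k,\,i^{P_n}_k(\omega)},\qquad X(\omega):=\lim_{k\to\infty}b_{k,\,i^{P_\infty}_k(\omega)},
\]
the limits existing for every $\omega$ since the corresponding cells are nested with diameters $\to0$. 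Each $X_n$ is a pointwise limit of the step functions $Y^n_k:=b_{k,\,i^{P_n}_k(\cdot)}$, hence measurable; the law of $Y^n_k$ is $\sum_i P_n(B_{k,i})\,\delta_{b_{k,i}}$, which converges weakly to $P_n$ as $k\to\infty$ (test a bounded continuous $f$ against it and use that $\sum_i f(b_{k,i})\mathbf{1}_{B_{k,i}}\to f$ pointwise as the cells shrink, together with dominated convergence), while $Y^n_k\to X_n$ pointwise forces the law of $Y^n_k$ to converge weakly to that of $X_n$; hence $\mathcal L(X_n)=P_n$, and likewise $\mathcal L(X)=P_\infty$.

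It remains to prove $X_n\to X$ $P$-almost surely. Let $\omega\in[0,1)$ fail to be an endpoint of any of the countably many intervals $J^{P_\infty}_{k,i}$; this discards a $\lambda$-null set. Fix $k$ and put $j:=i^{P_\infty}_k(\omega)$. The endpoints of $J^{P_\infty}_{k,j}$ are $P_\infty(L)$ and $P_\infty(L)+P_\infty(B_{k,j})$, where $L$ is the union of the level-$k$ cells preceding $B_{k,j}$ in the tree order; as $L$ is in general an \emph{infinite} union of cells, the convergence $P_n(L)\to P_\infty(L)$ follows not from mode-by-mode convergence but from the $\ell^1$-estimate above. Hence the endpoints of $J^{P_n}_{k,j}$ converge to those of $J^{P_\infty}_{k,j}$, and since $\omega$ lies in the interior of $J^{P_\infty}_{k,j}$ we conclude $\omega\in J^{P_n}_{k,j}$ for all $n$ large (depending on $\omega$ and $k$), i.e.\ $i^{P_n}_k(\omega)=j=i^{P_\infty}_k(\omega)$. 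For such $n$ both $X_n(\omega)$ and $X(\omega)$ lie in $\overline{B_{k,j}}$, so $d\bigl(X_n(\omega),X(\omega)\bigr)\le\text{diam}\,B_{k,j}<1/k$. Thus $\limsup_{n\to\infty}d(X_n(\omega),X(\omega))\le1/k$ for every $k$, whence $X_n(\omega)\to X(\omega)$; since this holds for $\lambda$-a.e.\ $\omega$, the theorem follows.

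I expect the crux to be the joint calibration of the partition construction and the almost-sure step: the nested continuity-set partitions and the tree-to-interval encoding must be arranged so that every ``prefix'' set $L$ depends continuously on $n$, and this is exactly where the Scheffé strengthening of $P_n(B_{k,i})\to P_\infty(B_{k,i})$ to the summable statement $\sum_i|P_n(B_{k,i})-P_\infty(B_{k,i})|\to0$ is indispensable --- and that strengthening in turn is what forces the $B_{k,i}$ to be genuine $P_\infty$-continuity cells partitioning all of $S$ rather than a cover up to a null set. Granting this, the argument delivers honest almost sure convergence of the \emph{full} sequence, with no recourse to Borel--Cantelli or to extraction of a subsequence.
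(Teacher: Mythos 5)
The paper does not prove this statement at all: Skorohod's theorem is quoted as a classical tool, with the proof delegated to Billingsley \cite{Bi}, so there is no in-paper argument to compare yours against line by line. Judged on its own terms, your sketch is the standard nested-partition/quantile coupling and the core of it is sound: the continuity-set partitions with null boundaries, the Scheff\'e upgrade of $P_n(B_{k,i})\to P_\infty(B_{k,i})$ to $\sum_i|P_n(B_{k,i})-P_\infty(B_{k,i})|\to 0$ (needed exactly because the ``prefix'' sets $L$ are infinite unions of cells), the identification of the laws by passing to the limit in the laws of the step maps $Y^n_k$, and the endpoint argument giving $i^{P_n}_k(\omega)=i^{P_\infty}_k(\omega)$ for large $n$ off a countable set, hence $d(X_n(\omega),X(\omega))<1/k$ eventually. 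This gives a complete proof when $S$ is complete and separable, which is the only case the paper uses ($S=C([0,T]:X^{-2})$ and $C([0,T],H^s)$), and it is arguably cleaner than the textbook proof in that it needs no auxiliary randomization and no Borel--Cantelli step.

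The one genuine soft spot is your reduction of the general separable case to the complete case: ``replace $S$ by the closure of a complete separable subspace carrying $P_\infty$'' does not work as stated. A Borel probability measure on a separable metric space need not be carried by any complete (equivalently, Polish) subspace, and even when $P_\infty$ is, the measures $P_n$ need not charge that subspace, so restricting the state space changes the hypotheses; without completeness your limits $\lim_k b_{k,i^{P_n}_k(\omega)}$ exist only in the completion $\overline S$, and there is no a priori reason the limit points lie in $S$ on a set of full measure. Billingsley's proof (the one the paper cites) sidesteps this precisely by not defining $X_n$ as a limit of cell representatives: for each $n$ it works at a single partition level $k_n$, keeps only finitely many cells plus a small remainder, and defines $X_n$ on the corresponding pieces of a product probability space by sampling from the exact conditional laws $P_n(\cdot\mid B_{k_n,i})$, so that $X_n$ is $S$-valued with law exactly $P_n$ and only separability is used. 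If you either add the hypothesis that $S$ is complete (harmless for this paper) or swap your final step for the conditional-sampling device, the argument is correct.
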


\section{The streamline formulation as an infinite system of ODE}\label{expectation} 

In this section, we expand \eqref{msqg-our-str} explicitly into an infinite system of ODEs in the Fourier frequencies. This explicit representation is then used to show that the $H^s$-norm of the nonlinearity of the equation is finite in $L^2_\rho$ provided that $s < -2$. Informally, this calculation shows that $X^{-2}$ is the smallest space with respect to which the system \eqref{msqg-our-str} is closed in $\rho$-expectation. Moreover this estimate is a key ingredient of the compactness argument used to construct the random flows in our main result.

\subsection{An infinite system of ODE} 

Introducing an orthonormal basis $(e_k)$ of $L^2(\mathbb{T}^2)$, we write 
$$\psi= \sum_k \psi_k e_k.$$ 
Now we can write our modified streamline formulation \eqref{msqg-our-str} in terms of coefficients with respect to the orthonormal basis $(e_k)$ 
as follows: 
\begin{equation}\label{ode}
\frac{d \psi_k }{dt} = B_k(\psi), \\
\end{equation}
where $B_k$ denotes the coefficients of the nonlinearity $B$
\begin{equation}\label{our-str-B-again} 
B(\psi, \psi) := -|D|^{-1}(\nabla^\perp |D|^{-\delta} \psi \cdot \nabla)|D|\psi
\end{equation}
in this basis. 

We calculate the coefficients $B_k$ for $k\ne 0$ of the nonlinearity $B$ in this basis to be
\begin{align*}
B_k & = \sum_{h + h^\prime = k, \, h, h'\ne 0} -|k|^{-1} |h|^{-\delta} h^\perp \cdot h^\prime |h^\prime| \psi_h \psi_{h^\prime} \\
& = \frac12 \biggl( \sum_{h + h^\prime = k, \, h, h'\ne 0} -|k|^{-1} |h^\prime|^{-\delta} (h^\prime)^\perp \cdot h |h| \psi_h \psi_{h^\prime} + \sum_{h + h^\prime = k, \, h, h'\ne 0} -|k|^{-1} |h|^{-\delta} h^\perp \cdot h^\prime |h^\prime| \psi_h \psi_{h^\prime} \biggr) \\
& = \frac12 \biggl( \sum_{h + h^\prime = k, \, h, h'\ne 0} |k|^{-1} |h^\prime|^{-\delta} h^\perp \cdot h^\prime |h| \psi_h \psi_{h^\prime} + \sum_{h + h^\prime = k, \, h, h'\ne 0} -|k|^{-1} |h|^{-\delta} h^\perp \cdot h^\prime |h^\prime| \psi_h \psi_{h^\prime} \biggr) \\
& = \frac12 \sum_{h + h^\prime = k, \, h, h'\ne 0} |k|^{-1} (h^\perp \cdot h^\prime) (|h^\prime|^{-\delta} |h| - |h|^{-\delta} |h^\prime| ) \psi_h \psi_{h^\prime} 
\end{align*}
where in the above we symmetrized the sum using the divergence-free structure with an eye towards minimizing the number of positive factors of $|h|$ and $|h^\prime|$. This gives
\begin{equation}\label{BDefinition}
B_k = -\frac12 \sum_{h\ne 0,\, k} \left(h^\perp \cdot \frac{k}{|k|}\right) (|k - h|^{-\delta} |h| - |h|^{-\delta} |k - h| ) \psi_h \psi_{k - h} = :\sum_{h\ne 0,\, k} \alpha_{k, h} \psi_h \psi_{k - h}
\end{equation}
Notice that one can readily check that $\alpha_{k, h} = \alpha_{k, k - h}$ for all $k, h \in \mathbb{Z}^2$.

\subsection{Expectation of the Nonlinear Term} \label{subsec-expectation} 

The subsequent analysis depends strongly on the following crucial proposition:

\begin{proposition}\label{NonlinearityExpectation}
Let $0<\delta \leq 1$. Then $B$ given by \eqref{our-str-B-again} satisfies
$$B \in L^2_\rho(H^s, H^s), \; \; \; \; \mbox{ for all } s < -2.$$ 
\end{proposition}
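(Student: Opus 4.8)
The plan is to compute $\|B\|_{L^2_\rho(H^s,H^s)}^2 = \mathbb{E}_\rho \|B(\psi)\|_{H^s}^2 = \sum_{k\ne 0}|k|^{2s}\,\mathbb{E}_\rho|B_k(\psi)|^2$ using the explicit representation \eqref{BDefinition} and the Gaussian moment formulas \eqref{MomentExpectations}, then show the resulting deterministic double series converges for $s<-2$. First I would expand
\[
\mathbb{E}_\rho|B_k|^2 = \sum_{h\ne 0,k}\sum_{h'\ne 0,k}\alpha_{k,h}\overline{\alpha_{k,h'}}\,\mathbb{E}_\rho\big(\psi_h\psi_{k-h}\overline{\psi_{h'}\psi_{k-h'}}\big).
\]
Since $\rho$ is a centered Gaussian, Wick's theorem (Isserlis) reduces the fourth moment to a sum of products of the pair covariances in \eqref{MomentExpectations}; because $\mathbb{E}_\rho(\psi_k\psi_{k'})=0$ and $\mathbb{E}_\rho(\psi_k\overline{\psi_{k'}})=2\delta_{k,k'}|k|^{-2}|k'|^{-2}$, only the pairings matching $h$ with $h'$ (and $k-h$ with $k-h'$) or $h$ with $k-h'$ (and $k-h$ with $h'$) survive. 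Using $\alpha_{k,h}=\alpha_{k,k-h}$ these two contributions are equal up to a constant, so up to a harmless factor
\[
\mathbb{E}_\rho|B_k|^2 \;\lesssim\; \sum_{h\ne 0,k} |\alpha_{k,h}|^2\,\frac{1}{|h|^4\,|k-h|^4}.
\]

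Next I would bound $|\alpha_{k,h}|$. From \eqref{BDefinition}, $\alpha_{k,h} = -\tfrac12\big(h^\perp\cdot\tfrac{k}{|k|}\big)\big(|k-h|^{-\delta}|h| - |h|^{-\delta}|k-h|\big)$, so $|\alpha_{k,h}|\le \tfrac12 |h|\,\big||k-h|^{-\delta}|h| - |h|^{-\delta}|k-h|\big|$. The key algebraic point — the one that uses $\delta>0$ and the symmetrization performed in the text — is that the bracket is \emph{not} of size $|h|^{1-\delta}$ or $|k-h|^{1-\delta}$ generically but enjoys extra cancellation. Factoring, $|k-h|^{-\delta}|h| - |h|^{-\delta}|k-h| = \frac{|h|^{1+\delta} - |k-h|^{1+\delta}}{|h|^\delta |k-h|^\delta}$, and by the mean value theorem $\big||h|^{1+\delta}-|k-h|^{1+\delta}\big| \lesssim \big||h|-|k-h|\big|\,\big(|h|+|k-h|\big)^{\delta} \le |k|\,\big(|h|+|k-h|\big)^{\delta}$. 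Hence
\[
|\alpha_{k,h}| \;\lesssim\; |h|\cdot \frac{|k|\,(|h|+|k-h|)^{\delta}}{|h|^\delta|k-h|^\delta} \;\lesssim\; |k|\,|h|^{1-\delta}\,|k-h|^{-\delta}\,(|h|+|k-h|)^{\delta},
\]
and one should also keep the trivial alternative bound $|\alpha_{k,h}|\lesssim |h|\big(|h|^{1-\delta}+|k-h|^{1-\delta}\big)$ for the regime where the two frequencies are comparable. Feeding the sharp bound into the sum gives
\[
\mathbb{E}_\rho|B_k|^2 \;\lesssim\; |k|^2 \sum_{h\ne 0,k} \frac{|h|^{2-2\delta}(|h|+|k-h|)^{2\delta}}{|h|^4\,|k-h|^{4+2\delta}},
\]
and by splitting into $|h|\lesssim|k-h|$ and $|h|\gtrsim|k-h|$ (using symmetry $h\leftrightarrow k-h$ and $\alpha_{k,h}=\alpha_{k,k-h}$) the $(|h|+|k-h|)^{2\delta}$ factor is absorbed and the sum over $h$ converges, yielding $\mathbb{E}_\rho|B_k|^2 \lesssim |k|^2$ with a constant uniform in $k$ (one must check the $h$-sum, roughly $\sum_h |h|^{-2-2\delta}|k-h|^{-4}$ plus symmetric terms, converges — which it does since each exponent exceeds $2$ once $\delta>0$, and the convolution-type sum is bounded uniformly in $k$). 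Then
\[
\|B\|_{L^2_\rho(H^s,H^s)}^2 \;\lesssim\; \sum_{k\ne 0} |k|^{2s}\cdot|k|^2 \;=\; \sum_{k\ne 0}|k|^{2s+2},
\]
which is finite precisely when $2s+2 < -2$, i.e. $s<-2$, completing the proof.

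\textbf{Main obstacle.} The routine part is the Wick expansion and the final geometric series in $k$. The delicate step is obtaining the bound $\mathbb{E}_\rho|B_k|^2\lesssim |k|^2$ \emph{uniformly in $k$}: this requires (a) correctly exploiting the cancellation in $|k-h|^{-\delta}|h|-|h|^{-\delta}|k-h|$ via the $|h|^{1+\delta}-|k-h|^{1+\delta}$ factorization and the mean value theorem, which is exactly where $\delta>0$ is essential — at $\delta=0$ the bracket becomes $|h|-|k-h|$ divided by nothing extra and one loses a logarithm (the divergence alluded to in Section \ref{subsec-expectation}); and (b) carefully carrying out the case analysis on the relative sizes of $|h|,|k-h|,|k|$ so that the $h$-sum is shown to be bounded independently of $k$, including the near-diagonal regions $h\approx k$ or $h\approx 0$ (excluded anyway) and $h\approx k/2$. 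I would organize the $h$-summation by dyadic localization of $|h|$ and $|k-h|$ to make the uniformity transparent, and I expect the worst case to be $|h|\sim|k-h|\gg|k|$, where the cancellation in the bracket is most needed.
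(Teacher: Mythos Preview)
Your overall strategy is correct and the conclusion follows, but one numerical point needs fixing: after the Wick reduction you write the denominator as $|h|^4|k-h|^4$, apparently reading \eqref{MomentExpectations} literally. That formula is inconsistent with the paper's own computation a few lines into the proof, where the covariance used is $\mathbb{E}_\rho(|\psi_k|^2)\sim|k|^{-2}$ (as dictated by the Gaussian built from $\||D|\psi\|_{L^2}^2$), yielding $\sum_h\alpha_{k,h}^2/(|h|^2|k-h|^2)$. With your extra decay the inner $h$-sum is trivially bounded and would in fact give convergence for all $s<0$; with the correct $|h|^2|k-h|^2$ your argument still goes through and lands on the bound $\lesssim|k|^2$ you claim, so this is a bookkeeping slip rather than a gap --- but you should correct the exponent.

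Where your argument genuinely differs from the paper is in handling the bracket $|k-h|^{-\delta}|h|-|h|^{-\delta}|k-h|$. The paper rewrites it as a sum of three pieces,
\[
|h-k|^{-\delta}\bigl(|h|-|k-h|\bigr)\;+\;\bigl(|h|^{-\delta}-|h-k|^{-\delta}\bigr)\bigl(|h|-|k-h|\bigr)\;+\;|h|\bigl(|k-h|^{-\delta}-|h|^{-\delta}\bigr),
\]
and bounds each separately, invoking an auxiliary lemma (Lemma~\ref{DeltaDifference}) for $\bigl||k-h|^{-\delta}-|h|^{-\delta}\bigr|\lesssim|k||h|^{-1-\delta}$ in the regime $|h|\ge2|k|$ together with a low/high frequency split, producing three sums $S_1,S_2,S_3$ each $\lesssim|k|^2$. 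Your single-line factorization $(|h|^{1+\delta}-|k-h|^{1+\delta})/(|h|^\delta|k-h|^\delta)$ followed by the mean value inequality packages the same cancellation more economically: it replaces the three-term split and the lemma by one estimate, and the case analysis you sketch on the relative sizes of $|h|$ and $|k-h|$ then gives $\sum_h\alpha_{k,h}^2/(|h|^2|k-h|^2)\lesssim|k|^2$ directly. Both routes exploit exactly the same mechanism --- a gain of $|k|$ from the difference structure and the crucial $\delta>0$ for summability in $h$ --- but yours is shorter and avoids the auxiliary lemma.
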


\begin{proof}
We need to show that expectation of the expression $\|B(\psi)\|_{H^s}^2$ is finite. Using the expectations of the moments in \eqref{MomentExpectations} as well as the fact that $\alpha_{k, h} = \alpha_{k, k - h}$, we first have that

\begin{align*}
\mathbb{E}_\rho(\|B\|_{ H^s}^2) & = \sum_{k\ne 0} |k|^{2s} \sum_{h, \, h^\prime\ne 0} \alpha_{k, h} \alpha_{k, h^\prime} \mathbb{E}(\psi_h \psi_{k - h} \bar{\psi}_{h^\prime} \bar{\psi}_{k - h^\prime}) \\
& = 2 \sum_{k \ne 0} |k|^{2s} \sum_{h, h^\prime\ne 0} \frac{\alpha_{k, h} \alpha_{k, h^\prime}}{|h|^2|h - k|^2} (\delta_{h, h^\prime} + \delta_{h, k - h^\prime}) \\
& = 4 \sum_{k \ne 0} |k|^{2s} \sum_{h, h^\prime\ne 0} \frac{\alpha_{k, h}^2}{|h|^2|h - k|^2}.
\end{align*}
We focus first on establishing that the inner sum in this last expression converges\footnote{This is precisely the step that fails in the classical inviscid SQG model with $\delta = 0$.}. Substituting our expression for $\alpha_{k, h}$, we have
\begin{align*}
& \quad \sum_{h\ne 0, k} \frac{\alpha_{k, h}^2}{|h|^2|h - k|^2} \\
& = \frac14 \sum_{h\ne 0,k} \frac{\left(h^\perp \cdot \frac{k}{|k|}\right)^2 (|k - h|^{-\delta} |h| - |h|^{-\delta} |k - h| )^2}{|h|^2|h - k|^2} \\
& \lesssim \sum_{h\ne 0, k} \frac{(|k - h|^{-\delta} |h| - |h|^{-\delta} |k - h| )^2}{|h - k|^2} \\
& = \sum_{h\ne 0, k} \frac{\Bigl(|h|^{-\delta}(|h| - |k - h|) + |h|(|k - h|^{-\delta} - |h|^{-\delta}) \Bigr)^2}{|h - k|^2} \\
& = \sum_{h\ne 0,k} \frac{\Bigl(|h - k|^{-\delta}(|h| - |k - h|) + (|h|^{-\delta} - |h - k|^{-\delta})(|h| - |k - h|) + |h|(|k - h|^{-\delta} - |h|^{-\delta}) \Bigr)^2}{|h - k|^2} \\
& \lesssim \sum_{h \neq 0,k} \frac{|h - k|^{-2\delta}(|h| - |k - h|)^2}{|h - k|^2} + \sum_{h \neq 0,k} \frac{(|h|^{-\delta} - |h - k|^{-\delta})^2(|h| - |k - h|)^2}{|h - k|^2} +\sum_{h \neq 0,k} \frac{ |h|^2(|k - h|^{-\delta} - |h|^{-\delta})^2}{|h - k|^2} \\
& := S_1 + S_2 + S_3,
\end{align*}
where we have decomposed the sum into three sums based on the three terms in the numerator of the summand. Before estimating in detail, we present

\begin{lemma}\label{DeltaDifference}
Assume that $|h| \geq 2|k|$. Then $\left| |k - h|^{-\delta} - |h|^{-\delta}\right| \leq \delta |k| |h|^{-1-\delta}$.
\end{lemma}

\begin{proof}
Note that $|h| \geq 2|k|$ implies that $\frac12 |h| \leq |k - h| \leq \frac32 |h|$. Then
\begin{align*}
\left| |k - h|^{-\delta} - |h|^{-\delta}\right| & = \left| \int_{|h|}^{|k - h|} \delta z^{-1 - \delta} \, dz \right| \\
& \leq \left| |k - h| - |h| \right| \delta \max_{\lambda \in [0, 1]} \Bigl(\lambda|k - h| + (1 - \lambda)|h|\Bigr)^{-\delta - 1} \\
& \leq C_\delta |k| |h|^{-1-\delta},
\end{align*}
as $\delta > 0$.
\end{proof}

We return to estimating the above sums. In estimates on $S_1$ and $S_2$ we utilize the immediate consequence of the triangle inequality: 
\begin{equation}\label{expect-triangle} 
\left| \, |h| - |k-h| \,\right| \leq |k|.
\end{equation}
Then, for any $\delta > 0$,  the sum $S_1$ can be bounded from above as follows: 
\begin{align*}
S_1 & \leq \sum_{h\ne 0, k} \frac{|k|^2}{|k - h|^{2 + 2\delta}} \lesssim |k|^2.
\end{align*}
Next, utilizing \eqref{expect-triangle} and decomposing $S_2 = S_2^{lo} + S_2^{hi}$ depending on whether $|h|$ is less or greater than $2|k|$ respectively, and using 
Lemma \ref{DeltaDifference} we have that
\begin{align*}
S_2 & = S_2^{lo} + S_2^{hi} \\
& \lesssim \sum_{|h| \leq 2|k|,\, h\ne 0, k} \frac{|k|^2}{|h|^{2\delta}|k - h|^2} + \sum_{|h| \leq 2|k|\, h\ne 0, k} \frac{|k|^2}{|k - h|^{2+2\delta}} +\sum_{|h| \geq 2|k|, k\neq 0} \frac{\delta^2|k|^4 |h|^{ -2- 2\delta}}{|h - k|^2} \\
& \lesssim |k|^2 +|k|^{2-2\delta}.
\end{align*}
Similarly, by applying Lemma \ref{DeltaDifference}, we have 
\begin{align*}
S_3 & = S_3^{lo} + S_3^{hi} \\
& \lesssim \sum_{|h| \leq 2|k|,\, h\ne 0, k} \frac{|h|^2}{|h|^{2\delta}|k - h|^2} + \sum_{|h| \leq 2|k|\, h\ne 0, k} \frac{|h|^2}{|k - h|^{2+2\delta}} + \sum_{|h| \geq 2|k|,\, k\ne 0} \frac{|h|^2\delta^2|k|^2 |h|^{-2-2\delta}}{|h - k|^2} \\
& \lesssim |k|^2 + |k|^{2-2\delta}. 
\end{align*}
The maximum amount of smoothness imposed on $k$ from evaluating these sums is comparable to $|k|^2$. Therefore, the expectation at the beginning of the calculation can be estimated by
\begin{equation}\label{expectation}
\mathbb{E}_\rho(\|B\|_{H^s}^2) \lesssim \sum_k |k|^{2s + 2},
\end{equation}
which is finite provided we choose $s < -2$.
\end{proof}

By repeating the argument above that gives the finiteness of the expectation\footnote{Using sums ranging over frequencies $N_1 \leq |h|, |k - h| \leq |N_2|$.}
we establish a crucial convergence result, that relates the full nonlinearity and its truncated version appearing in Galerkin approximations of \eqref{msqg-our-str} (which will be 
introduced and analyzed in Section \ref{sec-Gal}). 
In order to state the convergence result we introduce the projection onto the subspace spanned by $(e_k)_{|k|\leq N}$ and denote it by $\Pi_N$, 
as well as the orthogonal projection $\Pi_N^\perp := (I - \Pi_N)$. Now we are ready to 
introduce the truncated version of the nonlinearity via: 
\begin{equation} \label{BN} 
B^N(\psi) := \Pi_N B( \Pi_N \psi).
\end{equation} 

The convergence results can be stated as follows: 

\begin{proposition}\label{SumTailGoesToZero}
If $s < -2$ then $B^N \to B \mbox{ in } L^2_\rho(H^s, H^s).$
\end{proposition}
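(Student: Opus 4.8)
The plan is to show $\|B^N - B\|_{L^2_\rho(H^s,H^s)}^2 \to 0$ as $N \to \infty$ by writing out the difference explicitly in Fourier modes and re-running the expectation computation from the proof of Proposition \ref{NonlinearityExpectation}, but now with the summation restricted to a region that shrinks to the empty set as $N \to \infty$. First I would observe that $B^N(\psi) = \Pi_N B(\Pi_N \psi)$ has Fourier coefficients $B^N_k = \mathbbm{1}_{|k|\leq N} \sum_{h \neq 0,k} \alpha_{k,h} \mathbbm{1}_{|h|\leq N} \mathbbm{1}_{|k-h|\leq N} \psi_h \psi_{k-h}$, so that
\begin{equation*}
(B - B^N)_k = \mathbbm{1}_{|k| > N} \sum_{h \neq 0,k} \alpha_{k,h}\psi_h\psi_{k-h} + \mathbbm{1}_{|k|\leq N}\sum_{h\neq 0,k} \alpha_{k,h}\bigl(1 - \mathbbm{1}_{|h|\leq N}\mathbbm{1}_{|k-h|\leq N}\bigr)\psi_h\psi_{k-h}.
\end{equation*}
In both pieces, the summand is exactly the one already controlled in Proposition \ref{NonlinearityExpectation}, except that in the second piece at least one of $|h|, |k-h|$ exceeds $N$; call this restricted index set $\mathcal{D}_N(k)$.

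Next I would apply $\mathbb{E}_\rho$ to $\|B - B^N\|_{H^s}^2$. As in the earlier proof, the moment formulas \eqref{MomentExpectations} together with $\alpha_{k,h} = \alpha_{k,k-h}$ collapse the double sum over $h, h'$ to a single diagonal sum, yielding a bound of the form
\begin{equation*}
\mathbb{E}_\rho(\|B - B^N\|_{H^s}^2) \lesssim \sum_{|k| > N} |k|^{2s} \sum_{h \neq 0,k} \frac{\alpha_{k,h}^2}{|h|^2|h-k|^2} + \sum_{|k| \leq N} |k|^{2s} \sum_{h \in \mathcal{D}_N(k)} \frac{\alpha_{k,h}^2}{|h|^2|h-k|^2}.
\end{equation*}
The first term is the tail $\sum_{|k| > N} |k|^{2s+2}$ of the convergent series from \eqref{expectation}, which goes to zero. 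For the second term I would re-examine the estimates on $S_1, S_2, S_3$: each of those sums over $h$ was shown to be $\lesssim |k|^2$ (up to the lower-order $|k|^{2-2\delta}$), and the key point is that the bound came from summing a convergent tail in $|h - k|$ (for $S_1$) or splitting at $|h| \sim |k|$ (for $S_2, S_3$). Restricting to $h \in \mathcal{D}_N(k)$ with $|k| \leq N$ forces $|h|$ or $|k-h|$, hence $|h-k|$, to be $\gtrsim N - |k|$ in the relevant regime; redoing the geometric/power sums with this extra constraint produces an extra factor that is a tail and so the inner sum over $\mathcal{D}_N(k)$ is $o(|k|^2)$ uniformly, or at worst bounded by $|k|^2$ times a summable-in-$k$ tail factor. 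Either way, dominated convergence (the full sum $\sum_k |k|^{2s+2}$ being the finite dominating series) gives that the second term vanishes as $N \to \infty$.

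The main obstacle is making the bound on the inner sum $\sum_{h \in \mathcal{D}_N(k)}$ genuinely uniform in $k \leq N$ and honestly $o(1)$ after summation — one has to be careful that the worst case is $|k|$ close to $N$, where the "extra room" $N - |k|$ is small, so a naive per-$k$ estimate need not decay. The clean way around this, which I would adopt, is not to seek decay in the inner sum for each fixed $k$ but to invoke the dominated convergence theorem directly on the double sum: for each fixed $(k,h)$ the indicator $\mathbbm{1}_{h \in \mathcal{D}_N(k)} \to 0$ as $N \to \infty$, the summand $|k|^{2s}\alpha_{k,h}^2/(|h|^2|h-k|^2)$ is nonnegative, and it is dominated by the same summand without the indicator, whose total sum is finite by the proof of Proposition \ref{NonlinearityExpectation} (this is precisely the content of the footnote about running the argument with frequencies ranging over $N_1 \leq |h|,|k-h| \leq N_2$). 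Hence the whole expression tends to $0$, which is the claim; I would remark explicitly that this is where $\delta > 0$ is used, inherited from the finiteness in Proposition \ref{NonlinearityExpectation}.
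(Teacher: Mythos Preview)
Your proposal is correct and follows essentially the same approach the paper indicates: the paper does not give a detailed proof of Proposition~\ref{SumTailGoesToZero} but simply remarks that one repeats the computation of Proposition~\ref{NonlinearityExpectation} with the sums restricted to frequency ranges $N_1 \leq |h|, |k-h| \leq N_2$, and your argument is precisely an explicit realization of that remark, with the dominated convergence step on the double sum being the clean way to handle the uniformity issue you correctly flagged.
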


\section{Construction and Invariance of the Truncated mSQG Flows} \label{sec-Gal}

We will construct our eventual random flows from a sequence of flows satisfying a truncated version of \eqref{msqg-our-str}. The dynamics of these approximate flows are only nontrivial on finite dimensional subspaces, leave $\rho$ invariant and conserve the $H^1$-norm. We show in this section that these properties suffice to construct flows for the approximate systems with arbitrarily long lifespans.

Since $H^s$ for $s < -2$ is the natural space in which to consider the nonlinearity $B$ in expectation, from this point we regard $\rho$ as defined on $X^{-2}$.

We introduce the $N$th approximate flow $\Psi^N(t, \psi)$ as the solution of the Cauchy problem 
\begin{equation}\label{FiniteDimFlow}
\begin{cases}
\partial_t \Psi^N(t) = B^N(\Psi^N(t)) \\
\Psi^N(0, \psi) = \psi.
\end{cases}
\end{equation}
If we let $V^N$ satisfy the finite dimensional system
\begin{equation}\label{FiniteDimFlowV}
\begin{cases}
\partial_t V^N(t) = B^N(V^N) \\
V^N(0, \psi) = \Pi_N \psi,
\end{cases}
\end{equation}
then observe that the flow $\Psi^N$ can be decomposed into 
\begin{equation}\label{FiniteDimFlowDecomp}
\Psi^N(t, \psi) = V^N(t, \Pi_N \psi) + \Pi_N^\perp \psi.
\end{equation}

Denote the $e_k$-component of $\Psi^N$ by $\Psi^N_k$. 
\begin{lemma}
Let $T > 0$ be given. Then there exists a unique flow $\Psi^N(t, \psi)$ solving \eqref{FiniteDimFlow} for all $t \in [0, T]$ with $\Psi^N_k(t, \psi) \in C([0, T], \mathbb{C})$ and which leaves the measure $\rho$ invariant.
\end{lemma}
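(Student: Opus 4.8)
The key structural observation is the decomposition \eqref{FiniteDimFlowDecomp}: since $B^N(\psi) = \Pi_N B(\Pi_N \psi)$ involves only the finitely many frequencies $|k| \leq N$, the dynamics of \eqref{FiniteDimFlow} split into a genuinely finite-dimensional ODE system \eqref{FiniteDimFlowV} for $V^N \in E_N$ and a trivial (constant-in-time) evolution on $E_N^\perp$. So the first step is to reduce everything to the finite-dimensional system \eqref{FiniteDimFlowV} on $E_N \cong \mathbb{C}^{(2N+1)^2}$. There, $B^N$ is a quadratic polynomial in the coordinates $(\psi_k)_{|k|\leq N}$, hence smooth and locally Lipschitz, so Picard--Lindel\"of gives a unique local-in-time $C^1$ solution. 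The continuity $\Psi^N_k \in C([0,T],\mathbb{C})$ for all $k$ then follows immediately: for $|k| \leq N$ from the ODE, and for $|k| > N$ because $\Psi^N_k(t) = \psi_k$ is constant.

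\textbf{Global existence via the conserved $H^1$-norm.} The only thing preventing a local solution from being global on $[0,T]$ is finite-time blow-up of $|V^N(t)|$. To rule this out I would show that $\|V^N(t)\|_{H^1}$ is conserved along the truncated flow. This should follow by the same symmetrization used to derive \eqref{BDefinition}: compute $\frac{d}{dt}\|V^N\|_{H^1}^2 = 2\,\mathrm{Re}\sum_{|k|\leq N} |k|^2 \bar V^N_k B^N_k(V^N)$, expand $B^N_k$ as a sum over $h + h' = k$ with $|h|,|h'|\leq N$, and exploit the antisymmetry of $h^\perp\cdot h'$ together with the reality condition $V^N_{-k} = \overline{V^N_k}$ (equivalently, that the truncation $\Pi_N$ preserves the transport structure, since $B(\psi,\psi)$ arises from divergence-free transport of $|D|\psi$, which formally conserves $\|\psi\|_{H^1}$, and the Galerkin truncation with a symmetric frequency cutoff retains the cancellation). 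Concretely, the cubic sum $\sum_{h+h'=k} |k|^2 \overline{V^N_k}\,\alpha_{k,h}\,V^N_h V^N_{h'}$, when summed over $k$ and combined with its conjugate, cancels by the symmetry $\alpha_{k,h} = \alpha_{k,k-h}$ and the sign change of $h^\perp\cdot h'$ under $h \leftrightarrow h'$. Once $\|V^N(t)\|_{H^1} = \|V^N(0)\|_{H^1}$ is established, $|V^N(t)|$ stays in a fixed ball for all time, so the local solution extends to all of $[0,T]$ (indeed to $[0,\infty)$), giving the unique global flow $\Psi^N$.

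\textbf{Invariance of $\rho$.} For the invariance statement I would use the decomposition $d\rho = d\rho_N\, d\rho_N^\perp$ from the end of Section 2 together with \eqref{FiniteDimFlowDecomp}. Since $\Psi^N$ acts as the identity on the $E_N^\perp$ factor, it suffices to show $V^N(t,\cdot)$ preserves the finite-dimensional Gaussian $d\rho_N = c_N \exp\bigl(-\tfrac12\sum_{|k|\leq N}\lambda_k^{-1}|\psi_k|^2\bigr)\,d\psi$ on $E_N$. By Liouville's theorem this needs two facts: (i) the vector field $B^N$ is divergence-free on $E_N$, i.e. $\sum_{|k|\leq N} \partial B^N_k/\partial \psi_k = 0$, which holds because $\partial(\psi_h\psi_{k-h})/\partial\psi_k$ is nonzero only when $h = k$ or $k - h = k$, i.e. $h = 0$, a frequency excluded from the sum \eqref{BDefinition}, so each diagonal term vanishes identically; and (ii) the Gaussian density is invariant along the flow, which follows from conservation of $\sum_{|k|\leq N}\lambda_k^{-1}|\psi_k|^2$ — but this is just $\|V^N\|_{H^1}^2$ rescaled, so it follows from the same computation as in the global existence step (note $\lambda_k^{-1} = |k|^{2-2s}$, and the conserved quantity is really $\|\psi\|_{H^1}^2$; one checks the cancellation directly for the weight $|k|^{2-2s}$ using the same antisymmetry, since the relevant identity $\alpha_{k,h}$-antisymmetry is weight-independent). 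Combining (i) and (ii) with the transport/Liouville formula $\frac{d}{dt}\int F(V^N(t,\psi))\,d\rho_N(\psi) = 0$ for all test functions $F$ gives invariance of $\rho_N$, hence of $\rho$.

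\textbf{Main obstacle.} The routine parts are Picard--Lindel\"of and the divergence-free check. The step requiring genuine care is verifying that the symmetric Galerkin truncation $B^N$ actually inherits the exact conservation of $\|\cdot\|_{H^1}$ and of the Gaussian weight sum — i.e. that no error terms survive the truncation. This works precisely because the frequency cutoff $|k|\leq N$ is symmetric under $h \mapsto k - h$ within the constraint region and under $k \mapsto -k$, so the cancellations in the full equation are not broken; had one used an asymmetric cutoff this would fail. I would make this explicit by writing the time derivative of the conserved quantity as a sum over ordered triples $(k,h,k-h)$ with all three of $k, h, k-h$ in the allowed frequency set, and showing this sum is purely imaginary (hence contributes zero to the real part) by pairing each triple with the one obtained by swapping $h \leftrightarrow k - h$.
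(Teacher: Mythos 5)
Your route is the same as the paper's: reduce via \eqref{FiniteDimFlowDecomp} to the finite-dimensional system \eqref{FiniteDimFlowV}, apply Picard--Lindel\"of, globalize using conservation of the $H^1$ norm, and deduce invariance of $\rho$ from Liouville's theorem (divergence-free vector field on $E_N$ together with conservation of the Gaussian exponent, which is the truncated $H^1$ norm), followed by the factorization $d\rho = d\rho_N\, d\rho_N^\perp$ and Fubini--Tonelli. The divergence-free check and the lifting step are exactly as in the paper.

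The one step where your proposal, as written, would fail is the concrete mechanism you give for $H^1$ conservation. Pairing a triple with the one obtained by swapping $h \leftrightarrow k-h$ produces no cancellation: by the symmetry $\alpha_{k,h} = \alpha_{k,k-h}$ every term of $\sum_{k}\sum_{h} |k|^2\,\overline{V^N_k}\,\alpha_{k,h} V^N_h V^N_{k-h}$ is \emph{invariant} under that swap (both $h^\perp\cdot k$ and $|k-h|^{-\delta}|h| - |h|^{-\delta}|k-h|$ change sign, so their product does not), so this pairing shows nothing. The cancellation that actually works pairs the output mode with an input mode: using reality, $\overline{V^N_k} = V^N_{-k}$, write the unsymmetrized sum as $-\sum_{h+h'+m=0} |m|\,|h|^{-\delta}(h^\perp\cdot h')\,|h'|\, V^N_m V^N_h V^N_{h'}$ over nonzero modes with $|h|, |h'|, |m| \leq N$; exchanging $m \leftrightarrow h'$ preserves this symmetric constraint set and flips the sign of the coefficient, since $h^\perp\cdot m = h^\perp\cdot(-h-h') = -\,h^\perp\cdot h'$, so the terms cancel in pairs. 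The paper sidesteps this Fourier bookkeeping entirely by arguing at the operator level: since $V^N \in E_N$ one has $\Pi_N |D| V^N = |D| V^N$, hence $\tfrac12 \tfrac{d}{dt}\|\,|D|V^N\|_{L^2}^2 = \langle |D|V^N, -(\nabla^\perp |D|^{-\delta} V^N \cdot \nabla)|D|V^N\rangle = 0$ because the advecting velocity is divergence-free; you in fact state this reason parenthetically, and it is the cleanest way to make your ``no error terms survive the truncation'' point rigorous. Finally, your aside about re-checking the cancellation ``for the weight $|k|^{2-2s}$'' is unnecessary, and the claim that the cancellation is weight-independent is not true in general: in the coordinates $\langle \psi, e^s_k\rangle_{\mathcal{H}}$ the exponent of $\rho_N$ equals the truncated $H^1$ norm itself, all $s$-dependence cancelling, so conservation of $\|V^N\|_{H^1}$ is exactly what is needed. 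With these repairs your argument coincides with the paper's proof.
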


\begin{proof}
Let $\Psi^N(t, \psi)$ solve \eqref{FiniteDimFlow}, and consider $V^N$ as defined in \eqref{FiniteDimFlowV}. In order to check that the $H^1$-norm of $\Psi^N(t, \psi)$ is conserved, it suffices by \eqref{FiniteDimFlowDecomp} it suffices to check that that the $H^1$ norm of $V^N$ is conserved. By definition $$B_N(V^N) = -\Pi_N|D|^{-1}(\nabla^\perp |D|^{-\delta} V^N \cdot \nabla)\Pi_N |D| V^N,$$ and so
\begin{align}
\frac12 \frac{d}{dt} \|\,|D|V^N\|_{L^2}^2 & = \langle |D|V^N, -\Pi_N(\nabla^\perp |D|^{-\delta} V^N \cdot \nabla)|D| \Pi_N V^N \rangle \nonumber \\
& = \langle \Pi_N |D| V^N, -(\nabla^\perp |D|^{-\delta} V^N \cdot \nabla)\Pi_N |D|V^N \rangle \label{Lemma5-1Step3} \\ 
& = \langle \Pi_N |D| V^N, (\Pi_N |D|V^N) (\nabla \cdot \nabla^\perp) |D|^{-\delta} V^N \rangle \nonumber \\
& \quad + \langle (\nabla^\perp |D|^{-\delta} V^N \cdot \nabla) \Pi_N |D| V^N, \Pi_N |D|V^N \rangle \nonumber\\
& = \langle \Pi_N |D| V^N, (\nabla^\perp |D|^{-\delta} V^N \cdot \nabla)\Pi_N |D|V^N \rangle \label{Lemma5-1Step5} \\ 
& = 0, \label{Lemma5.1Conclusion}
\end{align}
where \eqref{Lemma5.1Conclusion} follows since \eqref{Lemma5-1Step3} and \eqref{Lemma5-1Step5} in the above chain of equalities are exactly opposites of each other. Therefore, $\|V^N\|_{H^1}^2$ is conserved in time. Local existence of the flow $V^N$ follows by the classical Picard-Lindel\"of Theorem, and then global existence of the flow follows by the uniform boundedness of the $H^1$ norm of $V^N$ in time.

Next, we claim that the flow $V^N$ preserves the finite dimensional Lebesgue measure. By Liouville's Theorem, it suffices to check that the divergence of $B^N(V^N)$ is zero. Denoting the coordinates of $V^N$ in $E_N$ by $(v_1, v_2, \ldots, v_N)$, we have
\begin{align*}
\text{div}_{E_N}(B^N(V^N)) & = \text{div}_{E_N} \left( \sum_{|h| \leq N} \alpha_{h, k} v_h v_{k - h} \right) \\
& = \sum_{|k| \leq N} \frac{\partial}{\partial v_k} \sum_{|h| \leq N} \alpha_{h, k} v_h v_{k - h} \\
& = \sum_{|k| \leq N} (\alpha_{k, k} + \alpha_{0, k}) v_0 \\
& = 0,
\end{align*}
where the last inequality follows immediately by inspection of the formula \eqref{BDefinition} for $\alpha_{h, k}$.

Since the flow $V^N$ both conserves the $H^1$ norm and leaves Lebesgue measure invariant, it also leaves the finite dimensional Gaussian measure $\rho_N \circ \pi_N^{-1}$ invariant. Then, writing 
$$\psi = \Pi_N \psi + \Pi_N^\perp \psi =: \psi_N + \psi_N^\perp$$ 
along the orthogonal decomposition $H^s = E_N \oplus E_N^\perp$, we have by Fubini-Tonelli that for any $F : H^s \to \mathbb{R}$ with $s < 0$,
\begin{align*}
\int_{H^s} F(\Psi^N(t, \psi)) \, d\rho(\psi) & = \int_{E_N^\perp} \left( \int_{E_N} F\left(V^N(t, \psi_N) + \psi_N^\perp\right) d\rho_N(\psi_N) \right) d\rho_N^\perp(\psi_N^\perp) \\
& = \int_{E_N^\perp} \left( \int_{E_N} F\left(\psi_N + \psi_N^\perp \right) d\rho_N(\psi_N) \right) d\rho_N^\perp(\psi_N^\perp) \\
& = \int_{H^s} F(\psi) \, d\rho(\psi)
\end{align*}
where we applied the invariance of $\rho_N(\psi_N)$ under $V^N(t, \psi_N)$ with the measurable function $F(\cdot + \psi_N^\perp)$. Finally, since every $\psi \in X^\sigma$ is in some $H^s$ for $s < \sigma$, the invariance also holds in any space $X^\sigma$ with $\sigma \leq 0$ and measurable $F : X^\sigma \to \mathbb{R}$.
\end{proof}

\section{Convergence argument}\label{ConvergenceArgument}

From this point onward, we consider only a Gaussian measure $\rho$ constructed as in Section 2.2 defined on $X^{-2}$.

In order to construct random flows from our (essentially) finite-dimensional deterministic flows, we regard the deterministic flows $\Psi^N(t, \psi)$ as stochastic processes sampled from $X^s$ with state space $C([0, T] : X^s)$ and introduce the measures $\nu_N$ supported on the infinite dimensional path space $C([0, T] : H^s)$ as their laws:
\begin{equation}\label{NuNDefn}
\nu_N(\Gamma) = \rho(\{\psi \in X^{-2} : \Psi^N(\psi, \cdot) \in \Gamma\}), \qquad \Gamma \subset C([0, T] : X^{-2}). 
\end{equation}

Our first goal is to show that the laws $\nu_N$ can be used to construct a measure $\nu$ that will serve as the law of our eventual candidate flows. We accomplish this using the compactness provided by Prokorov's Lemma; to verify the hypotheses of that lemma we first need to show some useful analytic estimates.

\begin{lemma}\label{GagliardoNirenburg}
Let $T > 0$ be given. Let $-\infty < s_2 \leq s_1 < +\infty$, and denote $\overline{s} = \frac12(s_1 + s_2)$. Suppose that $\gamma \in L^2_T H^{s_1}$ and $\partial_t \gamma \in L^2_T H^{s_2}$. Then for all $s < \overline{s}$, we have $\gamma \in L^\infty_T H^{\overline{s}}$ and
\begin{equation}
\|\gamma\|_{L^\infty_T H^{\overline{s}}} \lesssim \|\gamma\|_{L^2_T \dot{H}^{s_1}}^\frac12 \|\gamma\|_{H^1_T H^{s_2}}^\frac12
\end{equation}
\end{lemma}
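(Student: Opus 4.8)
The statement is an interpolation-type inequality of Lions--Magenes flavor: a function with one-half derivative of time regularity into $H^{s_2}$ and $L^2$-in-time values in $H^{s_1}$ lies in $C_T H^{\overline{s}}$ (hence $L^\infty_T H^{\overline{s}}$). The cleanest route is to work frequency by frequency. First I would reduce to the spatially Fourier-transformed picture: writing $\gamma(t) = \sum_{k} \gamma_k(t) e^{ik\cdot x}$, the hypotheses say $\sum_k |k|^{2s_1} \int_0^T |\gamma_k(t)|^2\,dt < \infty$ and $\sum_k |k|^{2s_2} \int_0^T |\dot\gamma_k(t)|^2\,dt < \infty$, and the goal is $\sup_{t\in[0,T]} \sum_k |k|^{2\overline s}|\gamma_k(t)|^2 \lesssim$ (product of the two norms). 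So it suffices to prove the scalar statement: if $f\in H^1([0,T];\mathbb{C})$ then for each $t$,
\begin{equation*}
|k|^{2\overline s}|f(t)|^2 \;\lesssim\; |k|^{2s_1}\|f\|_{L^2_T}^2 \;+\; |k|^{2s_2}\|\dot f\|_{L^2_T}^2,
\end{equation*}
with an implied constant depending only on $T$, and then sum in $k$.

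\textbf{The scalar estimate.} For fixed $k$ with $|k|\ge 1$, set $a = |k|^{s_1}$, $b = |k|^{s_2}$, so $|k|^{\overline s} = \sqrt{ab}$ and, since $s_2\le s_1$, $a\ge b$. Starting from the fundamental theorem of calculus, for any $t, r\in[0,T]$ one has $|f(t)|^2 \le 2|f(r)|^2 + 2T\int_0^T|\dot f|^2$; averaging over $r\in[0,T]$ gives $|f(t)|^2 \lesssim_T \|f\|_{L^2_T}^2 + \|\dot f\|_{L^2_T}^2$. This already handles the regime $a\asymp b$, but to get the \emph{geometric mean} weight $\sqrt{ab}$ one must be sharper when $a\gg b$. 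The standard trick: pick $r$ with $|f(r)|^2 \le T^{-1}\|f\|_{L^2_T}^2$ and integrate $\tfrac{d}{dt}|f|^2 = 2\mathrm{Re}(\bar f \dot f)$ from $r$ to $t$, so
\begin{equation*}
|f(t)|^2 \;\le\; \tfrac1T\|f\|_{L^2_T}^2 + 2\int_0^T |f|\,|\dot f|\,dt \;\le\; \tfrac1T\|f\|_{L^2_T}^2 + 2\|f\|_{L^2_T}\|\dot f\|_{L^2_T}.
\end{equation*}
Multiplying by $\sqrt{ab}$ and using $\sqrt{ab}\,XY \le \tfrac12(a X^2 + b Y^2)$ with $X = \|f\|_{L^2_T}$, $Y = \|\dot f\|_{L^2_T}$, together with $\sqrt{ab}\le a$ (from $a\ge b$) on the first term absorbed into $aX^2$ after noting $\tfrac1T$ is a constant, yields exactly $\sqrt{ab}\,|f(t)|^2 \lesssim_T a\|f\|_{L^2_T}^2 + b\|\dot f\|_{L^2_T}^2$, i.e.\ the claimed scalar bound.

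\textbf{Assembly and continuity.} Summing the scalar inequality over $k\in\mathbb{Z}^2$ and taking the supremum over $t\in[0,T]$ gives
\begin{equation*}
\|\gamma\|_{L^\infty_T H^{\overline s}}^2 \;\lesssim_T\; \|\gamma\|_{L^2_T H^{s_1}}^2 + \|\dot\gamma\|_{L^2_T H^{s_2}}^2 \;\lesssim_T\; \|\gamma\|_{L^2_T\dot H^{s_1}}\,\|\gamma\|_{H^1_T H^{s_2}},
\end{equation*}
where the last step replaces the sum of two terms by twice their geometric mean at the cost of rescaling in the $|k|\ge 1$ range (using $\dot H^{s_1}\asymp H^{s_1}$ since the relevant functions, being Fourier series on $\mathbb{T}^2$ built from the mean-zero $\psi$, carry no zero mode — or else one notes the zero mode contributes only to $s$ with $|k|^{2s}$ replaced by $1$ and can be handled trivially). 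Finally, the $C_T H^{\overline s}$ regularity (and hence the legitimacy of writing $\sup_t$ rather than $\mathrm{ess\,sup}_t$) follows because each $\gamma_k\in H^1([0,T])\hookrightarrow C([0,T])$, and the partial sums $\sum_{|k|\le N}\gamma_k(t)e^{ik\cdot x}$ form a Cauchy sequence in $C_T H^{\overline s}$ by the same frequency-localized bound applied to tails.

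\textbf{Main obstacle.} The only genuinely delicate point is squeezing out the \emph{geometric-mean} weight $|k|^{2\overline s}$ rather than the trivial $\max(|k|^{2s_1},|k|^{2s_2}) = |k|^{2s_1}$ that a crude argument gives; this is exactly what the "integrate $\tfrac{d}{dt}|f|^2$ from a good base point" step accomplishes, and it is the heart of the proof. Everything else — the Fourier reduction, the summation in $k$, and upgrading $L^\infty$ to $C$ — is routine. One should also be mildly careful that the constant is uniform in $k$ (it is: it depends only on $T$) and that the statement "$s < \overline s$" in the hypothesis is a red herring for the conclusion at level $\overline s$ itself — presumably it is there because the authors only need $\gamma \in L^\infty_T H^{s}$ for $s<\overline s$ elsewhere, and one may simply prove the stronger endpoint statement and note the claimed one follows a fortiori.
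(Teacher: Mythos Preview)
Your argument is essentially correct and considerably more detailed than the paper's own proof, which consists of a single sentence citing Lemma~3.3 of Burq--Thomann--Tzvetkov. You supply a self-contained Lions--Magenes-type proof via frequency decomposition; the paper simply invokes an external reference. What your approach buys is transparency: the reader sees exactly why the exponent $\overline{s} = \tfrac12(s_1+s_2)$ arises, and no outside lemma is needed.

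Two small points deserve tightening. First, with your convention $a=|k|^{s_1}$, $b=|k|^{s_2}$, the weights in the target inequality are $|k|^{2\overline s}=ab$, $|k|^{2s_1}=a^2$, $|k|^{2s_2}=b^2$, so you should multiply the scalar bound by $ab$ (not $\sqrt{ab}$) and use $2ab\,XY\le a^2X^2+b^2Y^2$ together with $ab\le a^2$; the computation is otherwise identical. Second, the passage from the additive bound $\|\gamma\|_{L^\infty_T H^{\overline s}}^2 \lesssim \|\gamma\|_{L^2_T H^{s_1}}^2 + \|\dot\gamma\|_{L^2_T H^{s_2}}^2$ to the product form does not follow from ``rescaling'' on a finite time interval (time dilation changes $[0,T]$). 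Instead, return to the sharper scalar bound $|f(t)|^2 \lesssim_T X^2 + XY$, multiply by $ab$, and apply Cauchy--Schwarz at the summation stage: $\sum_k ab\,X_k^2 \le (\sum_k a^2X_k^2)^{1/2}(\sum_k b^2X_k^2)^{1/2}$ and $\sum_k (aX_k)(bY_k) \le (\sum_k a^2X_k^2)^{1/2}(\sum_k b^2Y_k^2)^{1/2}$, both of which are dominated by $\|\gamma\|_{L^2_T H^{s_1}}\|\gamma\|_{H^1_T H^{s_2}}$. With these two fixes the proof is complete.
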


\begin{proof}
By a paradifferential version of the classical Gagliardo-Nirenberg inequality, c.f. Lemma 3.3 of \cite{btt}.
\end{proof}

Let $X$ be a Banach space containing the support of some measure $\mu$. In what follows we abuse notation slightly by introducing the abbreviated notation
\begin{equation}
\|f\|_{L^2_\mu X}^2 := \int_X \|f\|_X^2 \, d\mu(f)
\end{equation}

\begin{lemma}\label{LoControl}
Let $T > 0$ and $\sigma < -2$ be given. Then for any $\gamma \in L^2_T H^{\sigma}$ we have
\begin{equation}
\|\gamma\|_{L^2_{\nu_N} L^2_T H^\sigma} \lesssim T.
\end{equation}
\end{lemma}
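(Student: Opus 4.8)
The plan is to reduce the claimed bound on $\|\gamma\|_{L^2_{\nu_N} L^2_T H^\sigma}$ to the expectation estimate already furnished by Proposition \ref{NonlinearityExpectation}, using the fact that $\nu_N$ is the pushforward of $\rho$ under the flow map $\psi \mapsto \Psi^N(\psi, \cdot)$ together with the invariance of $\rho$ under $\Psi^N(t, \cdot)$ established in Section \ref{sec-Gal}. First I would unwind the definition of the norm: by \eqref{NuNDefn},
\begin{equation*}
\|\gamma\|_{L^2_{\nu_N} L^2_T H^\sigma}^2 = \int_{C([0,T]:X^{-2})} \|\gamma\|_{L^2_T H^\sigma}^2 \, d\nu_N(\gamma) = \int_{X^{-2}} \int_0^T \|\Psi^N(t,\psi)\|_{H^\sigma}^2 \, dt \, d\rho(\psi),
\end{equation*}
so that by Fubini-Tonelli the quantity equals $\int_0^T \mathbb{E}_\rho\bigl(\|\Psi^N(t,\cdot)\|_{H^\sigma}^2\bigr)\,dt$.

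The second step is to exploit the invariance of $\rho$ under the map $\psi \mapsto \Psi^N(t,\psi)$, which was proved in the Lemma of Section \ref{sec-Gal}: applying that invariance with the measurable function $F(\psi) = \|\psi\|_{H^\sigma}^2$ gives $\mathbb{E}_\rho(\|\Psi^N(t,\cdot)\|_{H^\sigma}^2) = \mathbb{E}_\rho(\|\psi\|_{H^\sigma}^2)$ for every $t \in [0,T]$. Hence the double integral collapses to $T \cdot \mathbb{E}_\rho(\|\psi\|_{H^\sigma}^2)$. The third step is to observe that the remaining expectation is a finite constant depending only on $\sigma$: using the moment formula \eqref{MomentExpectations}, $\mathbb{E}_\rho(\|\psi\|_{H^\sigma}^2) = \sum_{k\ne 0} |k|^{2\sigma}\,\mathbb{E}_\rho(|\psi_k|^2) = \sum_{k\ne 0} 2|k|^{2\sigma - 2}$, which converges whenever $2\sigma - 2 < -2$, i.e. $\sigma < 0$, and in particular for $\sigma < -2$. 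This yields $\|\gamma\|_{L^2_{\nu_N} L^2_T H^\sigma}^2 \lesssim T$, whence the stated bound after taking square roots (the hypothesis $\gamma \in L^2_T H^\sigma$ is what makes the left side finite and the manipulation legitimate; one should note the implicit constant is uniform in $N$, which is the point of the lemma).

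I do not anticipate a genuine obstacle here — this lemma is essentially bookkeeping built on the invariance property and the trace-class summability already in hand. The only minor points requiring care are: justifying the Fubini-Tonelli interchange (legitimate since the integrand is nonnegative and measurable, the latter because $\Psi^N$ depends continuously on $\psi$ in $X^{-2}$ by the previous section), and confirming that the invariance Lemma of Section \ref{sec-Gal} applies to the unbounded nonnegative functional $\psi \mapsto \|\psi\|_{H^\sigma}^2$ — this follows by monotone convergence after truncating to $F_M = \min(F, M)$, each $F_M$ being bounded measurable. The uniformity in $N$ is automatic since the final bound $T\sum_{k\ne 0}2|k|^{2\sigma-2}$ does not involve $N$ at all.
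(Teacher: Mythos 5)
Your proposal is correct and follows essentially the same route as the paper: rewrite the $\nu_N$-integral as a $\rho$-integral via the definition of the law \eqref{NuNDefn}, interchange integrals by Fubini--Tonelli, use the invariance of $\rho$ under $\Psi^N(t,\cdot)$ to collapse the time integral to a factor of $T$, and bound $\mathbb{E}_\rho(\|\psi\|_{H^\sigma}^2)$ by a finite constant (which you make explicit via \eqref{MomentExpectations}). One small remark: your opening reference to Proposition \ref{NonlinearityExpectation} is unnecessary here — that expectation estimate on $B$ is what drives Lemma \ref{HiControl} for $\partial_t\gamma$, whereas this lemma needs only the finiteness of the second moment of $\rho$ itself, exactly as your actual computation shows.
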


\begin{proof}
We calculate that
\begin{align}
\|\gamma\|_{L^2_{\nu_N} L^2_T H^\sigma}^2 
& := \int_{C([0, T] : X^{-2})} \|\gamma\|_{L^2_T H^\sigma}^2 \, d\nu_N(\gamma) \nonumber \\
& = \int_{C([0, T] : X^{-2})} \int_0^T \|\gamma(\tau)\|_{H^\sigma}^2 \, d\tau \, d\nu_N(\gamma) \nonumber \\
& = \int_0^T\int_{X^{-2}} \|\Psi^N(\cdot, \psi)\|_{H^\sigma}^2 \, d\rho(\psi) \, d\tau \label{Lolaw} \\
& = T\int_{X^{-2}} \|\psi\|_{H^\sigma}^2 \, d\rho(\psi) \label{LoInvariance} \\
& \lesssim T.\nonumber
\end{align}
where to obtain \eqref{Lolaw} we used the Fubini-Tonelli Theorem along with the definition of the law \eqref{NuNDefn}, 
and to obtain \eqref{LoInvariance} we used the invariance of the flow $t \mapsto \Psi^N(t, \psi)$ with respect to the measure $\rho$.
\end{proof}

\begin{lemma}\label{HiControl}
Let $T > 0$ and $\sigma < -2$ be given. Then for any $\gamma$ such that $\partial_t \gamma \in L^2_T H^{\sigma}$
\begin{equation}
\|\partial_t \gamma\|_{L^2_{\nu_N}L^2_T H^\sigma}^2 \lesssim T.
\end{equation}
\end{lemma}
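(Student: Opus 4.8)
The plan is to mimic the proof of Lemma~\ref{LoControl}, but with the key new input being the expectation estimate for the nonlinearity from Proposition~\ref{NonlinearityExpectation}. The point is that along the approximate flow $\Psi^N$ the time derivative is exactly the truncated nonlinearity, $\partial_t \Psi^N(t,\psi) = B^N(\Psi^N(t,\psi))$, so that controlling $\partial_t\gamma$ in $L^2_{\nu_N}L^2_T H^\sigma$ amounts to controlling $B^N(\Psi^N(\cdot,\psi))$ in the same norm, and then we exploit the invariance of $\rho$ under $\Psi^N$ to reduce everything to an expectation of $\|B^N(\psi)\|_{H^\sigma}^2$ over $\rho$.

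Concretely, I would first unwind the definition:
\begin{align*}
\|\partial_t\gamma\|_{L^2_{\nu_N}L^2_T H^\sigma}^2
&= \int_{C([0,T]:X^{-2})} \int_0^T \|\partial_t\gamma(\tau)\|_{H^\sigma}^2 \, d\tau \, d\nu_N(\gamma) \\
&= \int_0^T \int_{X^{-2}} \|\partial_t \Psi^N(\tau,\psi)\|_{H^\sigma}^2 \, d\rho(\psi)\, d\tau \\
&= \int_0^T \int_{X^{-2}} \|B^N(\Psi^N(\tau,\psi))\|_{H^\sigma}^2 \, d\rho(\psi)\, d\tau,
\end{align*}
using Fubini--Tonelli and the definition \eqref{NuNDefn} of $\nu_N$ together with the equation \eqref{FiniteDimFlow}. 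Next I would invoke the invariance of $\rho$ under the map $\psi \mapsto \Psi^N(\tau,\psi)$ (established in the Lemma of Section~\ref{sec-Gal}), applied to the measurable function $\psi \mapsto \|B^N(\psi)\|_{H^\sigma}^2$, to conclude that the inner integral is independent of $\tau$ and equals $\int_{X^{-2}} \|B^N(\psi)\|_{H^\sigma}^2\, d\rho(\psi) = \|B^N\|_{L^2_\rho(H^\sigma,H^\sigma)}^2$. Carrying out the $\tau$-integration then gives a factor of $T$, so that $\|\partial_t\gamma\|_{L^2_{\nu_N}L^2_T H^\sigma}^2 = T\,\|B^N\|_{L^2_\rho(H^\sigma,H^\sigma)}^2$.

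Finally I would bound $\|B^N\|_{L^2_\rho(H^\sigma,H^\sigma)}$ uniformly in $N$. Since $B^N(\psi) = \Pi_N B(\Pi_N\psi)$ is a partial sum of the series defining $B$, the same computation as in the proof of Proposition~\ref{NonlinearityExpectation} — now with the frequency sums restricted to $|h|,|k-h|\le N$ and $|k|\le N$ — yields $\mathbb{E}_\rho(\|B^N\|_{H^\sigma}^2) \le \mathbb{E}_\rho(\|B\|_{H^\sigma}^2) \lesssim \sum_k |k|^{2\sigma+2} < \infty$ for $\sigma < -2$, with a constant independent of $N$ (alternatively one can simply cite Proposition~\ref{SumTailGoesToZero}, which gives $B^N \to B$ in $L^2_\rho$ and hence $\sup_N \|B^N\|_{L^2_\rho} < \infty$). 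Combining, $\|\partial_t\gamma\|_{L^2_{\nu_N}L^2_T H^\sigma}^2 \lesssim T$, as claimed.

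I do not expect a genuine obstacle here; the lemma is essentially a bookkeeping exercise combining three already-established facts (the definition of $\nu_N$, invariance of $\rho$ under $\Psi^N$, and the uniform expectation bound on $B^N$). The only point requiring a little care is the justification of Fubini--Tonelli and of applying $\rho$-invariance to the specific function $\psi\mapsto\|B^N(\psi)\|_{H^\sigma}^2$, which is legitimate since this function is nonnegative and measurable on $X^{-2}$; everything else is routine.
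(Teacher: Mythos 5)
Your proposal is correct and follows essentially the same route as the paper's proof: unwind the norm via Fubini--Tonelli and the definition \eqref{NuNDefn} of $\nu_N$, substitute $\partial_t\Psi^N = B^N(\Psi^N)$ from \eqref{FiniteDimFlow}, use the invariance of $\rho$ under $\Psi^N$, and then bound the resulting expectation by Proposition \ref{NonlinearityExpectation}. Your explicit observation that $\mathbb{E}_\rho(\|B^N\|_{H^\sigma}^2)\leq\mathbb{E}_\rho(\|B\|_{H^\sigma}^2)$ uniformly in $N$ (since the truncated sums consist of nonnegative terms) is a point the paper leaves implicit when it cites Proposition \ref{NonlinearityExpectation}, and is a welcome clarification rather than a deviation.
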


\begin{proof}
\begin{align}
\|\partial_t \gamma\|_{L^2_{\nu_N}L^2_T H^\sigma}^2 
& := \int_{C([0, T] : X^{-2})} \int_0^T \|\partial_t \gamma(\tau)\|_{H^\sigma}^2 \, d\tau \, d\nu_N(\gamma) \nonumber \\
& = \int_0^T \int_{C([0, T] : X^{-2})} \|\partial_t \gamma(\tau)\|_{H^\sigma}^2 \, d\nu_N(\gamma) \, d\tau \label{HiFubini} \\
& = \int_0^T \int_{X^{-2}} \|\partial_t \Psi^N(\tau, \psi)\|_{H^\sigma}^2 \, d\rho(\psi) \, d\tau \label{HiNuNDefn}\\
& = \int_0^T \int_{X^{-2}} \|B^N(\Pi_N \Psi^N(\tau,\psi))\|_{H^\sigma}^2 \, d\rho(\psi) \, d\tau \label{HiFiniteDimFlow} \\
& = \int_0^T \int_{X^{-2}} \|B^N(\psi)\|_{H^\sigma}^2 \, d\rho(\psi) \, d\tau \label{HiInvariance} \\
& \lesssim T, \label{HiBExp}
\end{align}
where in \eqref{HiFubini} we used the Fubini-Tonelli Theorem, to obtain \eqref{HiNuNDefn} we used the definition of the law \eqref{NuNDefn}, 
to obtain \eqref{HiFiniteDimFlow} we used \eqref{FiniteDimFlow}, 
to obtain \eqref{HiInvariance} we used the invariance of $\rho$ under $\Psi^N$, 
and to obtain \eqref{HiBExp} we crucially used Proposition \ref{NonlinearityExpectation}.
\end{proof}

\begin{proposition}\label{Tightness}
Let $T > 0$ and $s < -2$ be given. Then the family $(\nu_N)$ of measures is tight on $C([0, T], H^s)$.
\end{proposition}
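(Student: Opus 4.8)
The plan is to deduce tightness of $(\nu_N)$ on $C([0,T],H^s)$ from a compact embedding combined with the uniform bounds supplied by Lemmas~\ref{LoControl} and~\ref{HiControl}. First I would fix an exponent $\sigma$ with $s < \sigma < -2$ and set $\overline{s}$ so that $s < \overline{s}$ and $\overline{s}$ is the average of two exponents $s_1 > s_2$ both strictly below $-2$; concretely one may take $s_1 = \sigma$, pick $s_2 < -2$ with $\tfrac12(s_1+s_2) = \overline{s} > s$ (shrinking the gap if necessary). The point of this bookkeeping is that Lemma~\ref{GagliardoNirenburg} then gives, for $\nu_N$-a.e.\ path $\gamma$,
\[
\|\gamma\|_{L^\infty_T H^{\overline{s}}} \lesssim \|\gamma\|_{L^2_T H^{s_1}}^{1/2}\,\|\gamma\|_{H^1_T H^{s_2}}^{1/2},
\]
and the right-hand side is controlled in $L^2_{\nu_N}$ by Lemmas~\ref{LoControl} and~\ref{HiControl} (plus Cauchy--Schwarz in $\nu_N$), uniformly in $N$ by a constant of the form $C(T)$. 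In the same way, again uniformly in $N$, Lemmas~\ref{LoControl} and~\ref{HiControl} bound $\mathbb{E}_{\nu_N}\|\gamma\|_{L^2_T H^{s_1}}^2$ and $\mathbb{E}_{\nu_N}\|\gamma\|_{H^1_T H^{s_2}}^2$ by $C(T)$.

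Next I would introduce, for a parameter $R>0$, the set
\[
K_R := \Big\{ \gamma \in C([0,T],H^s) : \|\gamma\|_{L^\infty_T H^{\overline{s}}} \leq R,\ \|\gamma\|_{L^2_T H^{s_1}} \leq R,\ \|\partial_t\gamma\|_{L^2_T H^{s_2}} \leq R \Big\}.
\]
The claim is that $K_R$ is relatively compact in $C([0,T],H^s)$. This is the Aubin--Lions--Simon compactness lemma: the embedding $H^{\overline{s}} \hookrightarrow H^s$ is compact (since $\overline{s} > s$ on $\mathbb{T}^2$), the embedding $H^{\overline{s}} \hookrightarrow H^s$ factors through the Gelfand triple with $H^{s_2}$, and a bounded subset of $\{\gamma : \gamma \in L^\infty_T H^{\overline{s}},\ \partial_t\gamma \in L^2_T H^{s_2}\}$ is precompact in $C([0,T],H^s)$. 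Equicontinuity in time comes from $\partial_t\gamma \in L^2_T H^{s_2}$ (giving $H^{s_2}$-Hölder-$\tfrac12$ continuity of $t \mapsto \gamma(t)$), which interpolates with the uniform $H^{\overline{s}}$ bound to give equicontinuity in the $H^s$ norm; pointwise precompactness in $H^s$ comes from the compact embedding $H^{\overline{s}} \hookrightarrow H^s$; then Arzelà--Ascoli finishes it. (If one prefers to avoid invoking Aubin--Lions directly, the interpolation-plus-Arzelà--Ascoli argument I just sketched is self-contained given the bounds.)

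Finally I would combine the two pieces via Chebyshev. Given $\epsilon>0$, using the uniform moment bounds from the first paragraph and Chebyshev's inequality, choose $R = R(\epsilon,T)$ large enough that
\[
\nu_N\big(C([0,T],H^s)\setminus K_R\big) \leq \nu_N\{\|\gamma\|_{L^\infty_T H^{\overline{s}}} > R\} + \nu_N\{\|\gamma\|_{L^2_T H^{s_1}} > R\} + \nu_N\{\|\partial_t\gamma\|_{L^2_T H^{s_2}} > R\} \leq \epsilon
\]
for every $N$. Since $\overline{K_R}$ is compact in $C([0,T],H^s)$ and $\nu_N(\overline{K_R}) \geq 1-\epsilon$ for all $N$, the family $(\nu_N)$ is tight. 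The main obstacle — really the only nonroutine point — is verifying the compactness of $K_R$ in $C([0,T],H^s)$ cleanly; everything else is an assembly of the already-established Lemmas~\ref{GagliardoNirenburg}, \ref{LoControl}, and~\ref{HiControl} together with Chebyshev. One should also take a moment to check that $K_R$ is a Borel (indeed closed, or at least measurable) subset of $C([0,T],H^s)$ so that the Chebyshev estimates apply, which follows since the defining functionals are lower semicontinuous on the path space.
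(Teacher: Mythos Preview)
Your proposal is correct and uses the same ingredients as the paper (Lemmas~\ref{GagliardoNirenburg}, \ref{LoControl}, \ref{HiControl}, a compact embedding, and Chebyshev), but packages the compactness step slightly differently. The paper works directly with the H\"older space $C^{1/2}_T H^{\overline{s}}$: it bounds the full norm $\|\gamma\|_{C^{1/2}_T H^{\overline{s}}}$ in $L^2_{\nu_N}$ by splitting it into the terms $M_1,\dots,M_4$ (the $L^\infty_T H^{\overline{s}}$ piece via Lemma~\ref{GagliardoNirenburg} and the H\"older seminorm via $\|\partial_t\gamma\|_{L^2_T H^{\overline{s}}}$), and then invokes the compact inclusion $C^{1/2}_T H^{\overline{s}} \hookrightarrow C^0_T H^s$ together with Chebyshev on the single quantity $\|\gamma\|_{C^{1/2}_T H^{\overline{s}}}$. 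You instead take the compact set to be the intersection of three sublevel sets and appeal to Aubin--Lions--Simon (or the equivalent Arzel\`a--Ascoli/interpolation argument you sketch). Both routes are valid; the paper's has the minor advantage of reducing Chebyshev to one functional, while yours avoids explicitly bounding the H\"older seminorm by outsourcing that to Aubin--Lions. Note that your second constraint $\|\gamma\|_{L^2_T H^{s_1}} \leq R$ is not needed for the compactness of $K_R$ (the $L^\infty_T H^{\overline{s}}$ and $\partial_t\gamma \in L^2_T H^{s_2}$ bounds already suffice for Simon's lemma), though its presence does no harm.
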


\begin{proof}
Introduce the H\"older space $C^\frac12([0, T], H^\sigma) =:C_T^\frac12 H^\sigma$ with norm
\begin{equation}
\|\gamma\|_{C^\frac12_T H^\sigma} := \|\gamma\|_{L^\infty_T H^\sigma} + \sup_{t_1 \neq t_2 \in [0, T]} \frac{\|\gamma(t_1) - \gamma(t_2)\|_{H^\sigma}}{|t_1 - t_2|^\frac12}.
\end{equation}
For $s < -2$ given, choose $-\infty < s_2 \leq s_1 < +\infty$ so that $s_1 < -2$ and $s < \overline{s} := \frac12(s_1 + s_2)$. By Lemma \ref{GagliardoNirenburg}, we have the estimate
\begin{align*}
\|\gamma\|_{C^\frac12_T H^{\overline{s}}} & \lesssim \|\gamma\|_{L^2_T H^{s_1}} + \|\gamma\|_{L^2_T H^{s_2}} + \|\partial_t \gamma\|_{L^2_T H^{s_2}} + \sup_{t_1 \neq t_2 \in [0, T]} \frac{\|\gamma(t_1) - \gamma(t_2)\|_{H^{\overline{s}}}}{|t_1 - t_2|^\frac12} \\
& := M_1 + M_2 + M_3 + M_4.
\end{align*}
By Lemma \ref{LoControl}, we have $\|M_1\|_{L^2_{\nu_N}} + \|M_2\|_{L^2_{\nu_N}} \lesssim T$, and by Lemma \ref{HiControl} we have $\|M_3\|_{L^2_{\nu_N}} \lesssim T$. We also have using H\"older's inequality that
\begin{align*}
M_4 & = \frac{1}{|t_1 - t_2|^\frac12} \left\| \int_{t_1}^{t_2} \partial_t \gamma(\tau) \, d\tau \right\|_{H^{\overline{s}}} \\
& \leq \frac{1}{|t_1 - t_2|^\frac12} \int_{t_1}^{t_2} \|\partial_t \gamma(\tau)\|_{H^{\overline{s}}} \, d\tau \\
& \leq \|\partial_t \gamma\|_{L^2_T H^{\overline{s}}}
\end{align*}
so that $\|M_4\|_{L^2_{\nu_N}} \lesssim T$ as well, by Lemma \ref{HiControl}. To sum, we obtain
\begin{equation}\label{gammabound}\|\gamma\|_{C^\frac12_T H^{\overline{s}}}\lesssim T.
\end{equation}
We now construct the compact exhaustion of sets required to show tightness: for $\delta > 0$, define
\begin{equation}
K_\delta = \{\gamma \in C^0_T H^s : \|\gamma\|_{C^\frac12_T H^{\overline{s}}} \leq \delta^{-1}\}.
\end{equation}
Since the inclusion $C^\frac12_T H^{\overline{s}} \subset C^0_T H^s$ is compact, $K_\delta$ is also compact. But by Chebychev's inequality and \eqref{gammabound} we have
\begin{equation}
\nu_N(K_\delta^c) \leq \delta^2 \|\gamma\|_{L^2_{\nu_N} C^\frac12_T H^{\overline{s}}}^2 \lesssim \delta^2 T^2,
\end{equation}
which demonstrates that the family $\{\nu_N : N \in \mathbb{N}\}$ is tight in $C([0, T], H^s)$.
\end{proof}

For any fixed $s < -2$, Prokhorov's Lemma now implies the existence of a subsequence of measures which converges weakly to another measure $\nu_s$. Since Proposition \ref{Tightness} holds for arbitrary $s < -2$, a standard diagonalization argument allows us to select another subsequence (for which we abuse notation in denoting it by $(\nu_N)$) which converges to the measure $\nu$ supported on $C([0, T], X^{-2})$. Then, Skorokhod's Lemma assures the existence of a probability space $(\tilde\Omega, \tilde{\mathcal{F}}, \tilde P)$ as well as random processes $\tilde{\Psi}^N(\omega)$ and $\tilde{\Psi}(\omega)$ with values in $C([0, T], X^{-2})$ whose laws are $\nu_N$ and $\nu$ respectively, and moreover so that $$\tilde{\Psi}^N(\omega) \to \tilde{\Psi}(\omega) \text{ in } C([0, T], X^s), \qquad \text{a.e. }\omega \in \tilde\Omega.$$ 
Observe that by construction the laws of $\tilde{\Psi}^N$ and that of $\Psi^N$ in the path space $C([0, T], X^s)$ are the same. 

\section{Proof of Theorem \ref{FlowOnArbLongFinite}}

With the construction of $\tilde{\Psi}$ given in Section 6, we may now give the

\bigskip 
\begin{proof}[Proof of Theorem \ref{FlowOnArbLongFinite}] We first claim that $\rho$ agrees with the measures $\nu^{(t)}$ conditioned on evaluation at a fixed time $t \in [0, T]$, defined as the law of $\tilde{\Psi}(t, \omega)$ for fixed $t \in [0, T]$. To see this, construct for fixed $t$ and $A \subset X^{-2}$ the subset $$\Gamma_{A, t} := \{\gamma \in C([0, T] : X^{-2}) : \gamma(t) \in A\}.$$ Then
\begin{align}
\nu_N^{(t)}(A) & := \nu_N(\Gamma_{A, t}) \\
& = \rho(\{\psi \in X^{-2} : \Psi^N(\psi) \in \Gamma_{A, t} \}) \label{NuTNuNDefn} \\
& = \rho(\{\psi \in X^{-2} : \Psi^N(\psi, t) \in A \}) \\
& = \rho(\{\psi \in X^{-2} : \psi \in A\}) \label{NuTInvariance} \\
& = \rho(A),
\end{align}
where in \eqref{NuTNuNDefn} we used the definition of the law \eqref{NuNDefn}, and in \eqref{NuTInvariance} we used the invariance of $\rho$ under the flow $\Psi^N$ in \eqref{FiniteDimFlow}. Therefore $\nu_N^{(t)} = \rho$, so that $\rho$ is the distribution of $\tilde{\Psi}^N(\omega, t)$ for each $t \in [0, T]$. Now we may identify the time-evaluated measure $\nu^{(t)}$ to be
\begin{align*}
\nu^{(t)}(A) = \lim_{N \to \infty} \nu^{(t)}_N(A) = \rho(A).
\end{align*}
Thus $\rho$ is an invariant measure for the random flow $\tilde{\Psi}$; in particular for every measurable $F : X^{-2} \to \mathbb{R}$, we have shown \eqref{FullFlowInvariance}:
\begin{equation}
\int_{\tilde{\Omega}} F(\tilde{\Psi}(\omega,t)) \, d\tilde{P}(\omega) = \int_{X^{-2}} F(\psi) \, d\rho(\psi)
\end{equation}
This in turn implies that $\tilde{\Psi}$ takes values in $X^{-2}$ almost surely in $\tilde{\Omega}$, upon choosing $F(\cdot) = \|\cdot\|_{H^\sigma}$ for each $\sigma < -2$ above. Therefore we can expand 
$$\tilde{\Psi}(\omega, x, t) =: \sum_{k \in \mathbb{Z}^2_0} \tilde{\Psi}_{k}(\omega, t) e_{k}(x)$$ as usual.

Next we verify \eqref{FullFlowDuHamel}. Recall that for each $N$ we have from \eqref{FiniteDimFlow} that
\begin{equation}
\Psi^N(t, \psi) = \Psi^N(0, \psi) + \int_0^t B^N(\Psi^N(\tau, \psi)) \, d\tau
\end{equation}
Consider the residual associated to this equation
\begin{equation}
Y^N(t, \psi) := \Psi^N(t, \psi) - \Psi^N(0, \psi) - \int_0^t B^N(\Psi^N(\tau, \psi)) \, d\tau
\end{equation}
as well as the corresponding residual for the random variable $\tilde{\Psi}^N$:
\begin{equation}
\tilde{Y}^N(t, \omega) := \tilde{\Psi}^N(t, \omega) - \tilde{\Psi}^N(0, \omega) - \int_0^t B^N(\tilde{\Psi}^N(\tau, \omega)) \, d\tau
\end{equation}
Since $\mathcal{L}(\Psi^N) = \mathcal{L}(\tilde{\Psi}^N)$ by Skorokhod's Lemma, we have $\mathcal{L}(Y^N) = \mathcal{L}(\tilde{Y}^N)$. However, since $\Psi^N$ is a solution of \eqref{FiniteDimFlow}, we have that $\mathcal{L}(\tilde{Y}^N) = \mathcal{L}(Y^N) = \delta_0$. This implies that $\tilde{Y}^N = 0$ almost surely in $\tilde{\Omega}$, and so we have the following almost everywhere pointwise equation for $\tilde{\Psi}^N$:
\begin{equation}
\tilde{\Psi}^N(t, \omega) = \tilde{\Psi}^N(0, \omega) + \int_0^t B^N(\tilde{\Psi}^N(\tau, \omega)) \, d\tau, \qquad \text{a.e. } \omega \in \tilde{\Omega}.
\end{equation}
By the construction using Skorokhod's Lemma, we already have that $\tilde{\Psi}^N(t, \omega) \to \tilde{\Psi}(t, \omega)$ and $\tilde{\Psi}^N(0, \omega) \to \tilde{\Psi}(0, \omega)$ for each $t \in [0, T]$, a.s. in $\tilde{\Omega}$. Hence to show $\tilde{P}$-a.e. convergence of the Duhamel term it suffices to show (possibly up to the extraction of another subsequence) that
\begin{equation}\label{DuHamelConverges?}
\int_{\tilde{\Omega}} \left| \int_0^t B^N(\tilde{\Psi}^N(\omega, \tau) \, d\tau - \int_0^t B(\tilde{\Psi}(\omega, \tau)) \, d\tau \right| \, d\tilde{P}(\omega)
\end{equation}
approaches zero as $N \to \infty$. Following \cite{dade}, introduce an auxiliary index $M \in \mathbb{N}$; then we expand \eqref{DuHamelConverges?} as
\begin{align*}
\lim_{N \to \infty} \eqref{DuHamelConverges?} & \leq \lim_{N \to \infty} \int_{\tilde{\Omega}} \int_0^t |B^N(\tilde{\Psi}^N(\omega, \tau)) - B(\tilde{\Psi}^N(\omega, \tau))| \, d\tau \, d\tilde{P}(\omega) \\
& + \liminf_{M \to \infty} \lim_{N \to \infty} \int_{\tilde{\Omega}} \int_0^t |B(\tilde{\Psi}^N(\omega, \tau)) - B^M(\tilde{\Psi}^N(\omega, \tau))| \, d\tau \, d\tilde{P}(\omega) \\
& + \liminf_{M \to \infty} \lim_{N \to \infty} \int_{\tilde{\Omega}} \left| \int_0^t B^M(\tilde{\Psi}^N(\omega, \tau)) \, d\tau - \int_0^t B^M(\tilde{\Psi}(\omega, \tau)) \, d\tau \right| \, d\tilde{P}(\omega) \\
& + \liminf_{M \to \infty} \int_{\tilde{\Omega}} \int_0^t |B^M(\tilde{\Psi}(\omega, \tau)) - B(\tilde{\Psi}(\omega, \tau))| \, d\tau \, d\tilde{P}(\omega) \\
& := I_1 + I_2 + I_3 + I_4
\end{align*}
By H\"older's inequality in \eqref{I4Holder}, the invariance \eqref{FullFlowInvariance} of $\rho$ under $\tilde{\Psi}$ in \eqref{I4Invariance}, and Proposition \ref{SumTailGoesToZero} in \eqref{I4Prop42}, we have that
\begin{align}
I_4 & = \liminf_{M \to \infty} \int_{\tilde{\Omega}} \int_0^t |B^M(\tilde{\Psi}(\omega, \tau)) - B(\tilde{\Psi}(\omega, \tau))| \, d\tau \, d\tilde{P}(\omega) \notag \\
& \leq \liminf_{M \to \infty} \int_0^t \left(\int_{\tilde{\Omega}} |B^M(\tilde{\Psi}(\omega, \tau)) - B(\tilde{\Psi}(\omega, \tau))|^2 \, d\tilde{P}(\omega) \right)^\frac12 \, d\tau \label{I4Holder} \\
& \leq \liminf_{M \to \infty} T \left( \int_{X^{-2}} |B^M(\psi) - B(\psi)|^2 \, d\rho(\psi) \right)^\frac12 \label{I4Invariance} \\
& = 0. \label{I4Prop42} 
\end{align}
In order to bound $I_1$, first we fix $N$ and consider the chain of inequalities below. 
\begin{align}
& \quad\; \int_{\tilde{\Omega}} \int_0^t |B^N(\tilde{\Psi}^N(\omega, \tau)) - B(\tilde{\Psi}^N(\omega, \tau))| \, d\tau \, d\tilde{P}(\omega) \\
& = \int_0^t \int_{\tilde \Omega} |B^N(\tilde{\Psi}^N(\omega, \tau)) - B(\tilde{\Psi}^N(\omega, \tau))| \, d\tilde{P}(\omega) \, d\tau \label{I1Fubini} \\
& \leq T^{1/2} \left( \int_0^T \int_{\tilde{\Omega}} |B^N(\tilde{\Psi}^N(\omega, \tau)) - B(\tilde{\Psi}^N(\omega, \tau))|^2 \, d\tilde{P}(\omega) \, d\tau \right)^\frac12 \label{I1CS} \\
& = T^{1/2} \left( \int_0^T \int_{X^{-2}} | B^N(\Psi^N(\psi, \tau)) - B(\Psi^N(\psi, \tau))|^2 \,d\rho(\psi) \, d\tau \right)^\frac12 \label{I1SameLaw} \\
& = T \left( \int_{X^{-2}} |B^N(\psi) - B(\psi)|^2 \, d\rho(\psi) \right)^\frac12, \label{I1Invariance}
\end{align}
where we used the Fubini-Tonelli theorem in \eqref{I1Fubini} followed by Cauchy-Schwartz in \eqref{I1CS}, after which
we used from the Skorokhod Lemma that $\mathcal{L}(\tilde{\Psi}^N) = \mathcal{L}(\Psi^N)$ along with the definition of $\nu_N$ in \eqref{I1SameLaw}, and the fact that $\rho$ is invariant under $\Psi^N$ in \eqref{I1Invariance}. Now we let $N \rightarrow \infty$ and apply Proposition \ref{SumTailGoesToZero} to conclude: 
$$
I_1 = 0. 
$$

Next we estimate $I_2$. For a fixed $M$, we repeat the argument above treating $I_4$ to obtain 
the following identity and estimate: 
\begin{align}
& \quad\; \int_{\tilde{\Omega}} \int_0^t |B(\tilde{\Psi}^N(\omega, \tau)) - B^M(\tilde{\Psi}^N(\omega, \tau))| \, d\tau \, d\tilde{P}(\omega) \label{I2WithN} \\ 
& = \int_0^t \int_{X^{-2}} |B(\psi) - B^M(\psi)| \, d\rho(\psi) \, d\tau \label{I2Identity} \\ 
& \leq T \left( \int_{X^{-2}} |B(\psi) - B^M(\psi)|^2 d\rho(\psi) \right)^\frac12. \label{I2Estimate}
\end{align}
Notice that the equality \eqref{I2Identity} implies that \eqref{I2WithN} is independent of $N$. Therefore to evaluate $I_2$ we need only take the limit inferior $M \to \infty$ in $I_2$. But then applying Proposition \ref{SumTailGoesToZero} to \eqref{I2Estimate} implies that
$$ 
I_2 = 0.
$$

Before we estimate $I_3$, observe that for each fixed $M$, the mapping $\psi \mapsto B_M(\psi)$ is continuous on $X^{-2}$
thanks to the fact that the projection $\Pi_M$ makes the problem completely finite dimensional (the continuity is not uniform with respect to $M$). Recalling that the Skorokhod Lemma gives us that $\tilde{\Psi}^N(\omega) \to \tilde{\Psi}(\omega)$ in $C([0, T] : X^{-2})$, a.s. in $\tilde{\Omega}$, we conclude that for almost every $\omega \in \Omega$,
\begin{equation}
\int_0^t B^M(\tilde{\Psi}^N(\omega, \tau)) \, d\tau \to \int_0^t B^M(\tilde{\Psi}(\omega, \tau)) \, d\tau \qquad \text{as }N \to \infty.
\end{equation}
However, to obtain desired $L^1(\tilde{P})$ convergence, we will apply the Vitali Convergence Theorem (c.f. Theorem A.3.2 of \cite{bog}). To do that 
we show that, for each fixed $M$, the family of functions $$F_N(\omega) := \int_0^t B^M(\tilde{\Psi}^N(\omega, \tau)) \, d\tau$$ is equiintegrable with respect to the probability space $(\tilde{\Omega}, \tilde{\mathcal{F}}, \tilde{P})$.
That is, we must verify the following properties:
\begin{itemize}
\item{The $F_N$ are uniformly bounded in $N$ in $L^1(\tilde{P})$,}
\item{The $F_N$ satisfy $\lim_{\Lambda \to \infty} \sup_{N \in \mathbb{N}} \tilde{P}(\{ \omega \in \tilde{\Omega} : |F_N(\omega)| \geq \Lambda \}) = 0$.}
\end{itemize}
First we have
\begin{align}
\|F_N\|_{L^1(\tilde{P})} & \leq \|F_N\|_{L^2(\tilde{P})} \nonumber \\
& = \left( \int_{\tilde{\Omega}} \left| \int_0^t B^M(\tilde{\Psi}^N(\omega, t)) \, d\tau \right|^2 \, d\tilde{P}(\omega) \right)^\frac12 \nonumber \\
& \leq T \left( \int_{X^{-2}} \left| B^M(\psi)\right|^2 \, d\rho(\psi) \right)^\frac12 \label{equint-M}\\
& < \infty \label{equint-exp}
\end{align}
where in order to obtain \eqref{equint-M} we used Minkowski inequality, and to obtain \eqref{equint-exp} we used the expectation result. 
Similarly by Chebyshev's inequality we have
\begin{align*}
\tilde{P}(\{ \omega \in \tilde{\Omega}: |F_N(\omega)| \geq \Lambda \}) & \leq \frac{\|F_N\|_{L^2(\tilde{P})}^2}{\Lambda^2}
\end{align*}
and since we have already shown that $\|F_N\|_{L^2(\tilde{P})}$ is bounded uniformly in $N$, the equiintegrability of the $F_N$'s follows. Then by the Vitali Convergence Theorem we conclude that
\begin{equation}
\int_0^t B^M(\tilde{\Psi}^N(\omega, \tau)) \, d\tau \to \int_0^t B^M(\tilde{\Psi}(\omega, \tau)) \, d\tau \qquad \text{in } L^1(\tilde{P}) \text{ as }N \to \infty
\end{equation}
But since the above limit holds for each fixed $M$, we have $I_3 = 0$.
\end{proof}

\begin{appendix}

\section{Summary of Calculation for Non-Regularized Streamline Formulation}
In this section we indicate the main steps of the calculation for the expectation for the nonlinear term corresponding to the stream line formulation \eqref{mSQG-str}. We recall the equation \eqref{mSQG-str}

\begin{equation}
\begin{cases}
(|D|^{1 + \delta}\varphi)_t + (u \cdot \nabla)|D|^{1 + \delta}\varphi = 0,\\
u = \nabla^\perp \varphi.
\end{cases}
\end{equation}

Recast the nonlinearity as 

\begin{equation}
B(\varphi, \varphi) = -|D|^{1 + \delta}(\nabla^\perp \varphi \cdot \nabla)|D|^{1 + \delta} \varphi.
\end{equation}

Expand both $\varphi = \sum_k \varphi_k e_k$ and $B = \sum_k B_k e_k$ in the usual $L^2$ orthonormal basis and find

\begin{align*}
B_k & = |k|^{1 + \delta} \sum_{h + h^\prime = k} (h^\perp \cdot h^\prime)|h^\prime|^{1 + \delta} \varphi_h \varphi_{h^\prime} \\
& = |k|^{1 + \delta} \sum_h (h^\perp \cdot k)|k - h|^{1 + \delta} \varphi_h \varphi_{k - h} \\
& = \frac12 |k|^{1 + \delta} \sum_h (h^\perp \cdot k)(|k - h|^{1 + \delta} - |h|^{1 + \delta}) \varphi_h \varphi_{k - h}.
\end{align*}

The conserved enstrophy is now $\|\,|D|^{1 + \delta} \varphi\|_{L^2}$, and so we adjust the correlation for the Gaussian measure accordingly,
\begin{equation}
\rho(M) = \left(\prod_{|k|\leq N} \frac{1}{\sqrt{2\pi |k|^{2 + 2\delta}}} \right) \int_F \exp\left(-\frac12 \sum_{|k|\leq N} |k|^{-2 - 2\delta} |\varphi_k|^2 \right) \, d\varphi_1 \cdots d\varphi_{(2N + 1)^2},
\end{equation}
from which the moment expectations now read

\begin{align}
\mathbb{E}_\rho(\varphi_k) & = 0, \notag \\
\mathbb{E}_\rho(\varphi_k \varphi_{k^\prime}) & = 0, \\
\mathbb{E}_\rho(\varphi_k \overline{\varphi}_{k^\prime}) & = \frac{2\delta_{k, k^\prime}}{|k|^{2 + 2\delta}|k^\prime|^{2 + 2\delta}}.
\notag
\end{align}

Following the proof of Proposition 4.1  we  determine the convergence of the sum

\begin{equation}
\sum_{k} |k|^{2s - 2\delta} \sum_h \frac{\left(\frac{h^\perp}{|h|} \cdot \frac{k}{|k|}\right)^2(|k - h|^{1 + \delta} - |h|^{1 + \delta})^2}{|h|^{2\delta}|h - k|^{2 + 2\delta}}.
\end{equation}

Again using the analogue of Lemma \ref{DeltaDifference}, we have the bound 
$$|k - h|^{1 + \delta} - |h|^{1 + \delta} \leq C|k||h|^\delta,$$
 except this time we do not have to restrict this estimate to high frequencies since the exponent is greater than 1.  For high frequencies, where $|h - k| \sim |h|$, we have that the inner sum is at worst 
 $$\sum_{|h|\geq 2|k|} \frac{|k|^2|h|^{2\delta}}{|h|^{2 + 4\delta}} \lesssim |k|^{2 - 2\delta}.$$ 
 Using the same estimate, the low frequency sum is at worst 
 $$\sum_{|h| \leq 2|k|} \frac{|k|^2 |h|^{2\delta}}{|h|^{2\delta}|k - h|^{2 + 2\delta}} \lesssim |k|^2 \sum_{h \neq 0} \frac{1}{|k - h|^{2 + 2\delta}} \leq  |k|^2.$$  
 Thus together we have
\begin{equation}
\mathbb{E}_\rho (\|B\|_{H^s}^2) \lesssim \sum_k |k|^{2s - 2\delta}(|k|^2 + |k|^{2 - 2\delta} \leq \sum_k |k|^{2s - 2\delta + 2},
\end{equation}
which converges provided $s < -2 + \delta$.

\end{appendix}




\begin{thebibliography}{99}


\bibitem{alcr} S. Albeverio and A-B. Cruzeiro, 
{\it Global flows with invariant (Gibbs) measures for Euler and Navier-Stokes two-dimensional fluids},
Comm. Math. Phys. \textbf{129} (1990), No. 3, 431--444. 

\bibitem{alfe} S. Albeverio and B. Ferrario,
{\it Uniqueness of solutions of the stochastic Navier-Stokes equation with invariant measure given by the enstrophy},
Ann. Probab. \textbf{32} (2004), No. 2, 1632--1649. 


\bibitem{AF} L. Ambrosio and A. Figalli,
{\it Of flows associated to Sobolev vector fields in Wiener spaces: An approach \'a la DiPerna-Lions}, 
J. Functional Analysis, \textbf{256} (2009), 179?214.


\bibitem{Bi} P. Billingsley, 
Convergence of probability measures, 2nd edition, 
Wiles Series in Probability and Statistics (1999) 

\bibitem{bog} V. I. Bogachev, {\it Gaussian Measures.} Mathematical Surveys and Monographs, Vol. 62, American Mathematical Society (1998)

\bibitem{btt} N. Burq, L. Thomann and N. Tzvetkov,
{\it Remarks on the Gibbs Measures for Nonlinear Dispersive Equations} Preprint, ArXiV:1412.7499 (2014).

\bibitem{caco} A. Castro and D. C\'{o}rdoba, 
{\it Infinite energy solutions of the surface quasi-geostrophic equation},
Adv. Math. \textbf{225} (2010), No. 4, 1820--1829.


\bibitem{chcowu} D. Chae, P. Constantin and J. Wu, 
{\it Inviscid models generalizing the two-dimensional Euler and the surface quasi-geostrophic equations}, 
Arch. Ration. Mech. Anal. \textbf{202} (2011), No. 1, 35--62. 


\bibitem{chcocogawu} D. Chae, P. Constantin, D. C\'{o}rdoba, F. Gancedo and J. Wu, 
{\it Generalized surface quasi-geostrophic equations with singular velocities},
Comm. Pure Appl. Math. \textbf{65} (2012), No. 8, 1037--1066. 

\bibitem{comata1} P. Constantin, A. Majda, E. Tabak, 
{\it Singular front formation in a model for quasigeostrophic flow}, 
Phys. Fluids \textbf{6} (1) (1994) 9--11.

\bibitem{comata2} P. Constantin, A. Majda, E. Tabak, 
{\it Formation of strong fronts in the 2D quasigeostrophic
thermal active scalar}, 
Nonlinearity \textbf{7} (1994) 1495--1533.

\bibitem{ciw} P. Constantin, G. Iyer, and J. Wu, {\it Global regularity for a modified critical dissipative quasi-geostrophic equation}, Indiana Univ. Math. J. \textbf{57} (2008), 2681--2692. 

\bibitem{co} D. C\'{o}rdoba, 
{\it Nonexistence of simple hyperbolic blow up for the quasi-geostrophic equation}, 
Ann. of Math. \textbf{148} (1998), 1135--1152.

\bibitem{coco} A. C\'{o}rdoba and D. C\'{o}rdoba, 
{\it A Maximum Principle applied to Quasi-Geostrophic Equations}, 
Comm. Math. Phys. \textbf{249} (2004), 511--528.

\bibitem{cofe} D. C\'{o}rdoba and C. Fefferman, 
{\it Growth of solutions for QG and 2D Euler equations}, 
J. Amer. Math. Soc. \textbf{15} (2002), No. 3, 665--670. 

\bibitem{cofomaro} D. C\'{o}rdoba, M. Fontelos, A. Mancho and J. Rodrigo, 
{\it Evidence of singularities for a family of contour dynamics equations}, 
Proc. Natl. Acad. Sci. USA \textbf{102} (2005), 5949--5952.


\bibitem{dade} G. Da Prato and A. Debussche, 
{\it Two-Dimensional Navier--Stokes Equations Driven by a Space-TimeWhite Noise}, 
Journal of Functional Analysis \textbf{196} (2002), 180--210. 

\bibitem{held} I.M. Held, R.T. Pierrehumbert, S.T. Garner and K.L. Swanson
{\it Surface quasi-geostrophic dynamics}, 
J. Fluid Mech.\textbf{282} (1995), 1--20 .


\bibitem{kiselev1} A. Kiselev, L. Ryzhik, Y. Yao and A. Zlatos 
{\it Finite time singularity for the modified SQG patch equation}, 
Annals of Math (2) \textbf{184} (2016), no. 3, 909--948.


\bibitem{kiselev2} A. Kiselev, Y. Yao and A. Zlatos 
{\it Local regularity for the modified SQG patch equation}, 
Preprint, ArXiV:1508.07611 (2015).


\bibitem{KoSi-2007} L. Koralov and Y.G. Sinai, 
Theory of Probability and Random Processes, 
Springer, Universitext 2007.

\bibitem{pierre} R. T. Pierrehumbert, I. M. Held, and K. L. Swanson, {\it Spectra of local and nonlocal two-dimensional turbulence}, Chaos, Solitons Fractals \textbf{4} (1994), 1111 --1116. 



\bibitem{re} S. Resnick, 
{\it Dynamical problems in nonlinear advective partial differential equations},
Ph.D. thesis, University of Chicago, 1995.

\bibitem{schor} N. Schorghofer, {\it Energy spectra of steady two-dimensional turbulent flows}, Physical Review E, \textbf{61}, no. 6, (2000), 6772-6577

\bibitem{smith} K. S. Smith, G. Boccaletti, C. C. Henning, I. Marinov, C. Y. Tam, I. M. Held, and G. K. Vallis, {\it Turbulent diffusion in the geostrophic inverse cascade}, J. Fluid Mech. \textbf{469} (2002), 13--48. 

\bibitem{sym} A. Symeonides, {\it Invariant Measures for the Two-Dimensional Averaged-Euler Equations.} Preprint, ArXiV:1605.06974 (2016)


\bibitem{wu} J. Wu, 
{\it Solutions of the 2D quasi-geostrophic equation in H\"{o}lder spaces},
Nonlinear Anal. \textbf{62} (2005), No. 4, 579--594. 

\bibitem{yud} V. I. Yudovich, {\it Non-stationary flows of an ideal incompressible uid}, USSR Comput. Math. and Math. Phys. \textbf{3} (1963), 1407–1456.




\end{thebibliography}
\end{document}